\def\ge{\geqslant}
\def\le{\leqslant}
\def\a{\alpha}
\def\b{\beta}
\def\g{\gamma}
\def\G{\Gamma}
\def\o{\omega}
\def\s{\sigma}
\def\t{\tau}
\def\th{\theta}
\def\k{\kappa}
\def\l{\lambda}
\def\i{^{-1}}
\def\<{\langle}
\def\>{\rangle}
\newcommand{\sA}{\ensuremath{\mathscr{A}}\xspace}
\newcommand{\sB}{\ensuremath{\mathscr{B}}\xspace}
\newcommand{\sP}{\ensuremath{\mathscr{P}}\xspace}
\newcommand{\sU}{\ensuremath{\mathscr{U}}\xspace}
\newcommand{\fka}{\ensuremath{\mathfrak{a}}\xspace}
\newcommand{\bA}{\mathbf A}
\newcommand{\bG}{\mathbf G}
\newcommand{\bM}{\mathbf M}
\newcommand{\bN}{\mathbf N}
\newcommand{\bS}{\mathbf S}
\newcommand{\bT}{\mathbf T}
\def\br{\breve}
\def\brG{\breve G}
\def\brF{\breve F}
\def\brW{\breve W}
\def\brN{\breve N}
\def\brI{\breve \CI}
\def\brI{\breve I}
\def\bsA{\breve \sA}
\newcommand{{\BG}}{\ensuremath{\mathbb {G}}\xspace}
\newcommand{{\BK}}{\ensuremath{\mathbb {K}}\xspace}
\newcommand{\BQ}{\ensuremath{\mathbb {Q}}\xspace}
\newcommand{\BR}{\ensuremath{\mathbb {R}}\xspace}
\newcommand{\BS}{\ensuremath{\mathbb {S}}\xspace}
\newcommand{\BZ}{\ensuremath{\mathbb {Z}}\xspace}
\newcommand{\Ad}{{\mathrm{Ad}}}
\let\Im\relax
\DeclareMathOperator{\Im}{Im}
\newcommand{\Int}{\ensuremath{\mathrm{Int}}\xspace}
\DeclareMathOperator{\Res}{Res}
\newcommand{\SL}{{\mathrm{SL}}}
\newcommand{\lto}{\longmapsto}
\newtheorem{theorem}{Theorem}
\newtheorem{theoremA}{Theorem}
\newtheorem{proposition}[theorem]{Proposition}
\newtheorem{corollary}[theorem]{Corollary}
\theoremstyle{definition}
\newtheorem{example}[theorem]{Example}
\newtheorem{remark}[theorem]{Remark}
\numberwithin{equation}{section}
\numberwithin{theorem}{section}
\renewcommand{\to}{%
   \ifbool{@display}{\longrightarrow}{\rightarrow}%
   }
\let\shortmapsto\mapsto
\renewcommand{\mapsto}{%
   \ifbool{@display}{\longmapsto}{\shortmapsto}%
   }
\newcommand{\isoarrow}{%
   \ifbool{@display}{\overset{\sim}{\longrightarrow}}{\xrightarrow\sim}%
   }
\begin{document}
\author[X.~He]{Xuhua He}
\address{Xuhua He, Department of Mathematics, University of Maryland, College Park, MD 20742, USA}
\email{xuhuahe@math.umd.edu}
\thanks{X.~H.~was partially supported by NSF DMS-1801352. S.N. is supported in part by QYZDB-SSW-SYS007 and NSFC grant (No. 11501547 and No. 11621061).}
\author[S.~Nie]{Sian Nie}
\address{Sian Nie, Institute of Mathematics, Academy of Mathematics and Systems Science, Chinese Academy of Sciences, 100190, Beijing, China}
\email{niesian@amss.ac.cn}

\title{A geometric interpretation of Newton strata}
\keywords{$p$-adic groups, Newton decomposition, $\s$-conjugacy classes}
\subjclass[2010]{20G25, 22E67}
\begin{abstract}
	The Newton strata of a reductive $p$-adic group  are introduced in \cite{Newton} and play some role in the representation theory of $p$-adic groups. In this paper, we give a geometric interpretation of the Newton strata.
\end{abstract}
\maketitle

\section*{Introduction}

Let $F$ be a $p$-adic field and $\bG$ be a connected reductive group over $F$. Let $\brF$ be the completion of the maximal unramified extension of $F$ and $\s$ be the Frobenius automorphism of $\brF$ over $F$. Let $G=\bG(F)$ and $\brG=\bG(\brF)$. Then $\s$ induces a Frobenius automorphism on $\brG$ which we still denote by $\s$. Then $G=\brG^\s$.

Let $B(\bG)$ be the set of $\s$-conjugacy classes of $\brG$. In case $F=\BQ_p$, $B(\bG)$ is the set of isomorphism classes of isocrystals with $\brG$-structure. It appears naturally in the study of Shimura varieties and the study of Rapoport-Zink space on period spaces for $p$-divisible groups. Recently, Fargues \cite{F1} gives a classification of $\brG$-torsors on the Fargues-Fontaine curve via $B(\bG)$, which is a starting point of his program to geometrize the local Langlands correspondence \cite{F2}.

The set $B(\bG)$ is classified by Kottwitz in \cite{kottwitz-isoI} and \cite{kottwitz-isoII} via the Kottwitz map and the Newton map. An analogous of the Newton map for the ordinary conjugation action on $\brG$, instead of the $\s$-conjugation action, is introduced by Kottwitz and Viehmann in \cite{KV}. This leads to a decomposition of $\brG$ into subsets stable under the conjugation action, with a discrete index set.

On the other hand, in \cite{Newton}, the first author introduces the Newton decomposition of the $p$-adic group $G$, a decomposition of $G$ into certain nice open and closed subsets, which are stable under the conjugation action. The strata in the decomposition are called the Newton strata. The Newton decomposition has found many applications to the study of representations of $p$-adic groups, e.g. a new proof of the Howe's conjecture on the invariant distributions, and the trace Paley-Wiener Theorem and abstract Selberg principle for mod-$l$ representations (see a series of papers \cite{Newton}, \cite{H2}, \cite{CH}). The precise definition of the Newton strata, however, is rather technical and involves the elements of the Iwahori-Weyl group $W$ of $G$ that are of minimal length in their conjugacy classes.

The main purpose of this paper is to give a geometric interpretation of the Newton strata of $G$, via the set $B(\bG)$ and its variations.

We first consider the case where $\bG$ is split over $F$. As pointed out in \cite[Introduction]{Newton}, the Kottwitz map and the Newton map of $G$ coincide with the restriction to $G$ of the Kottwitz map and the Newton map of $\brG$ respectively. In this case, the Newton stratum of $G$ is the intersection of $G$ with the corresponding $\s$-conjugacy class of $\brG$. A similar result holds if we replace the $\s$-conjugacy classes of $\brG$ by the subset of $\brG$ stable under the ordinary conjugation action with the same Kottwitz point and Newton point.

The situation is more subtle for non-split groups. One technical difficulty is that the Newton maps over $F$ and over $\brF$ do not coincide. The images of the Newton maps are rational coweights. The underline affine space, of the vector space spanned by coweights, may be identified with the apartment by mapping the zero coweight to a given special vertex in the apartment. However, in general, a special vertex in the apartment over $F$ may not be a special vertex, or even not a vertex in the apartment over $\brF$.

To overcome this difficulty, we use the straight conjugacy classes of the Iwahori-Weyl groups instead of the Kottwitz points and the Newton points. The straight conjugacy classes are certain nice conjugacy classes in the Iwahori-Weyl group. For the precise definition, we refer to \S \ref{straight}. Let $\brW \sslash_{\s} \brW$ be the set of straight $\s$-conjugacy classes of the Iwahori-Weyl group $\brW$ of $\brG$. In \cite{He14}, the first author establishes a natural bijection $$B(\bG) \cong \brW \sslash_{\s} \brW.$$ In other words, we have $$\brG=\sqcup_{\br \nu \in \brW \sslash_{\s} \brW} [\br \nu],$$ where $[\br \nu]$ is the $\s$-conjugacy class of $\brG$ corresponding to $\br \nu$.

Similarly, we consider the set of straight (ordinary) conjugacy classes $W \sslash W$ of the Iwahori-Weyl group of $G$. The Newton decomposition on $G$ is given by $$G=\sqcup_{\nu \in W \sslash W} G(\nu),$$ where $G(\nu)$ is the Newton stratum of $G$ corresponds to $\nu$. See \S\ref{5.1} for the precise definition.

We have the natural identification $W=\brW^\s$ which induces a natural map $$W\sslash W \to \brW \sslash_{\s} \brW.$$

This map, in some sense, provides a way to compare the Newton points over $F$ and over $\brF$. However, Kottwitz points over $F$ and over $\brF$ do not match in general. The solution to this problem is to replace $\brW \sslash_{\s} \brW$ by $\brW \sslash_{\s} \brW_a$, where $\brW_a$ is the affine Weyl group of $\brW$ and $\brW \sslash_{\s} \brW_a$ denotes the set of straight $\brW_a$-orbits on $\brW$ for the $\s$-conjugation action. We have

\begin{theoremA}[Theorem \ref{inject}]
	The natural embedding $W \to \brW$ induces an injection $$i: W\sslash W \hookrightarrow \brW \sslash_{\s} \brW_a.$$
\end{theoremA}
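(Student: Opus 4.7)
The plan is to analyze the induced map from the set of straight $W$-conjugacy classes in $W$ to the set of straight $\brW_a$-$\s$-orbits in $\brW$, sending $[w]_W$ to the $\brW_a$-$\s$-orbit of $w$. I must verify well-definedness and injectivity. For well-definedness, suppose $w_1, w_2 \in W$ are straight and $W$-conjugate via $u \in W$. Using the decomposition $W = W_a \rtimes \Omega$ with $\Omega = \br\Omega^\s$, I would write $u = u_a \tau$ with $u_a \in W_a \subseteq \brW_a$ and $\tau \in \Omega$; since $\s(u_a) = u_a$, conjugation by $u_a$ coincides with $\s$-conjugation by $u_a$, reducing the problem to showing that for each $\tau \in \Omega$ and each straight $w \in W$, the element $\tau w \tau^{-1}$ lies in the $\brW_a$-$\s$-orbit of $w$. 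Using $\s(\tau) = \tau$, this is equivalent to the inclusion $\Omega \subseteq \brW_a \cdot Z_\s(w)$, where $Z_\s(w) := \{z \in \brW : z w \s(z)^{-1} = w\}$ is the $\s$-centralizer of $w$ in $\brW$.

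For injectivity, assume $w_1, w_2 \in W$ are straight and $w_2 = x w_1 \s(x)^{-1}$ for some $x \in \brW_a$. Applying $\s$ to this relation and using $\s(w_i) = w_i$ yields $y := \s(x)^{-1} x \in Z_\s(w_1) \cap \brW_a$. It then suffices to find $z \in Z_\s(w_1)$ with $\s(z) = y z$: indeed, $u := x z$ then lies in $W$ (since $\s(u) = \s(x)\s(z) = \s(x) y z = x z = u$), and a short computation using $z w_1 z^{-1} = w_1 \cdot \s(z) z^{-1} = w_1 y$ shows $u w_1 u^{-1} = x w_1 \s(x)^{-1} = w_2$. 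Thus injectivity reduces to a Lang--Steinberg-type vanishing: every $y \in Z_\s(w_1) \cap \brW_a$ of the special form $\s(x)^{-1} x$ with $x \in \brW_a$ is a $\s$-coboundary inside $Z_\s(w_1)$.

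Both reductions rest on a structural description of $Z_\s(w)$ for a straight $w \in W$. The rational Newton vector $\nu_w$ determines a $\s$-stable standard Levi subgroup $\br M$ of $\brG$ (the centralizer of $\nu_w$), and $w$ sits as a basic element in the Iwahori-Weyl group of $\br M$. This identification should yield both required inputs: that the image of $Z_\s(w)$ in $\br\Omega$ contains $\Omega$ (for well-definedness), and that $Z_\s(w)$ satisfies Lang--Steinberg vanishing with respect to the $\s$-action restricted from $\brW_a$ (for injectivity). The main obstacle is executing this Levi comparison cleanly, given that, as the introduction notes, the apartments of $G$ and $\brG$ need not share special vertices, so the Newton vectors over $F$ and $\brF$ do not line up canonically. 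This is precisely what forces one to refine $\brW \sslash_\s \brW$ to $\brW \sslash_\s \brW_a$: working modulo $\brW_a$ records the full $\br\Omega$-component and circumvents the need to choose a common base vertex.
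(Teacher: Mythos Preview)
Your reductions are formally correct, but what remains after them is the entire content of the theorem. For well-definedness you need $\Omega \subseteq \brW_a \cdot Z_\s(w)$, and for injectivity you need that every $y \in Z_\s(w_1) \cap \brW_a$ of the form $\s(x)^{-1}x$ with $x \in \brW_a$ is a $\s$-coboundary in $Z_\s(w_1)$. Neither is proved; both are deferred to a ``structural description of $Z_\s(w)$'' via a Levi comparison that you yourself call ``the main obstacle'' and do not carry out. In fact the injectivity statement is equivalent to the theorem: if $u \in W$ conjugates $w_1$ to $w_2$, then $z := x^{-1}u$ is exactly the required element of $Z_\s(w_1)$, and conversely. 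So the cohomological reformulation has not reduced the difficulty. A further small omission: you never check that a straight element of $W$ is $\s$-straight in $\brW$, so that the image orbit actually lies in $\brW \sslash_\s \brW_a$ rather than merely $\brW /_\s \,\brW_a$; this needs Proposition~\ref{length-add}.

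The paper's argument is completely different and bypasses $Z_\s(w)$ altogether. It rests on the parametrization of straight orbits by the pair (Kottwitz point, Newton point): Theorem~\ref{partial} shows that $\br\pi_\s^\flat = (\br\k, \bar{\br\nu}_\s)$ separates straight $\brW_a$-$\s$-orbits in $\brW$, and the analogous statement holds over $F$. Well-definedness is then immediate from Corollary~\ref{bij}, which identifies $W\sslash W$ with $W\sslash W_a$, together with $W_a \subset \brW_a$ and $\s|_{W_a} = \id$. Injectivity becomes the claim that $\br\pi_\s^\flat(x) = \br\pi_\s^\flat(y)$ forces $\pi^\flat(x) = \pi^\flat(y)$ for straight $x,y \in W$. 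The Kottwitz part is formal. For the Newton part the paper gives a short geometric argument: fixing a special vertex $e \in \sA$, one has $\nu_w^F = e + \br\nu_w$ as points of $\br Y_\BQ$; after arranging both $F$-Newton points to lie in the same $F$-Weyl chamber, any root hyperplane of $\brW_0$ separating $\br\nu_x$ from $\br\nu_y$ would restrict to a relative root hyperplane through $e$ separating the $F$-Newton points, a contradiction. This is precisely how the paper resolves the special-vertex mismatch you highlight in your last paragraph, without ever touching $Z_\s(w)$.
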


Now let us come back to the group $G$. Let $\brG_1$ be the subgroup of $\brG$ generated by the parahoric subgroups. Let $B(\bG)^\flat$ be the set of $\brG_1$-orbits on $\brG$ for the $\s$-conjugation action. Then we have $$B(\brG)^\flat \cong \brW \sslash_{\s} \brW_a.$$ In other words, we have $$\brG=\sqcup_{\br \nu \in \brW \sslash_{\s} \brW_a} [\br \nu]^\flat,$$ where $[\br \nu]^\flat$ is the $\brG_1$-orbit on $\brG$ corresponding to $\br \nu$ for the $\s$-conjugation action.

Now we have the following geometric interpretation of the Newton strata.

\begin{theoremA}[Theorem \ref{5-5}]
	Let $\nu \in W \sslash W$. Then we have $$G(\nu)=G \cap [i(\nu)]^\flat.$$
\end{theoremA}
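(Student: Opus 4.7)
The plan is to prove both inclusions of the asserted equality using Iwahori-cell descriptions of both sides. Fix a $\sigma$-stable Iwahori $I\subset\brG$ and set $I_F:=I^\sigma$. Write $\nu_{\min}\subset W$ for the set of minimal-length elements of the $W$-conjugacy class $\nu$, and $\br\nu_{\min}\subset\brW$ for the analogous set attached to a straight $\brW_a$-$\sigma$-orbit $\br\nu\subset\brW$. I shall take as input from earlier in the paper the descriptions
\[G(\nu)\;=\;\bigcup_{w\in\nu_{\min}}\bigcup_{k\in G}k(I_F\dot w I_F)k^{-1},\qquad [\br\nu]^\flat\;=\;\bigcup_{\tilde w\in\br\nu_{\min}}\bigcup_{h\in\brG_1}h(I\dot{\tilde w}I)\sigma(h)^{-1},\]
the second being the $\brW_a$-orbit analogue of the minimal-length classification of $\sigma$-conjugacy classes in \cite{He14}.

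For the forward inclusion, I take $g=kxk^{-1}$ with $k\in G$, $x\in I_F\dot w I_F$, $w\in\nu_{\min}$. By the construction of $i$ in Theorem A, viewing $w\in W$ inside $\brW$ places it in $i(\nu)$; a short length-comparison (lengths agree on the common elements of $W$ and $\brW$, combined with straightness of both orbits) then shows $w\in i(\nu)_{\min}$. After reducing to $k\in G\cap\brG_1$ using that the complementary component of $G$ modulo $G\cap\brG_1$ is $\sigma$-fixed and preserves the relevant $\brG_1$-$\sigma$-orbit, the equality $\sigma(k)=k$ converts ordinary $G$-conjugation into $\sigma$-twisted $\brG_1$-conjugation, placing $g$ inside the $\brG_1$-$\sigma$-orbit of $x\in I\dot w I\subseteq[i(\nu)]^\flat$. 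Combined with $g\in G$, this yields $g\in G\cap[i(\nu)]^\flat$.

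For the reverse inclusion, take $g\in G\cap[i(\nu)]^\flat$ and write $g=hy\sigma(h)^{-1}$ with $h\in\brG_1$, $y\in I\dot{\tilde w}I$, $\tilde w\in i(\nu)_{\min}$. The argument proceeds in two descent steps. First, using the injectivity of $i$ from Theorem A together with the straightness of $i(\nu)$, I would establish that $i(\nu)_{\min}\cap W=\nu_{\min}$ and that $i(\nu)_{\min}$ forms a single $\brW_a$-$\sigma$-orbit; so after modifying $h$ I may assume $\tilde w=w\in\nu_{\min}\subseteq W$. Second, the identity $\sigma(g)=g$ forces $c:=h^{-1}\sigma(h)$ to satisfy $cy\sigma(c)^{-1}=y$, a $\sigma$-cocycle condition on the stabilizer of $y$ inside $\brG_1$. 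Trivializing $c$ produces $h'\in G\cap\brG_1$ with $h'^{-1}gh'\in I\dot w I\cap G=I_F\dot w I_F$, placing $g$ in $G(\nu)$.

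The main obstacle is the cocycle-trivialization in the second descent step. Since $\brG_1$ is not a connected algebraic group, the classical Lang's theorem does not apply directly; the standard remedy is to pass to the pro-unipotent radical of the relevant parahoric and reduce the cocycle equation to its reductive quotient over the residue field, where Lang's theorem does apply. The injectivity in Theorem A is essential at the end: it ensures that the $w\in\nu_{\min}$ produced by the descent represents exactly the class $\nu$ and not some other $W$-conjugacy class that could map into $i(\nu)$.
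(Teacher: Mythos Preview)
Your forward inclusion follows the paper's line (reduce via Theorem~\ref{min} to a straight representative $x\in\nu$, then use $\brI w\brI \subseteq \brG_1\cdot_\sigma \brI x\brI$), though two details are off. First, lengths do \emph{not} agree on $W\subset\brW$: only the weaker compatibility of Proposition~\ref{length-add} holds, which suffices to show that a $\th$-straight element of $W$ is $\th$-straight in $\brW$, but not that $\nu_{\min}\subseteq i(\nu)_{\min}$ in general. Second, the reduction to $k\in\brG_1$ is cleaner than you indicate: write $k=k_1\dot\omega$ with $k_1\in G_1\subseteq\brG_1$ and $\omega\in\Omega$, and absorb $\omega$ by replacing $w$ with $\omega w\omega^{-1}\in\nu_{\min}$.

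The reverse inclusion is where your proposal has a genuine gap. Your descent plan hinges on trivializing $c=h^{-1}\sigma(h)$; the correct relation coming from $\sigma(g)=g$ is $c\,\sigma(y)\,\sigma(c)^{-1}=y$, not $c y\sigma(c)^{-1}=y$. This places $c$ in the $\sigma$-twisted transporter from $\sigma(y)$ to $y$ inside $\brG_1$, a set with no evident group or pro-algebraic structure, and there is no off-the-shelf $H^1$-vanishing that applies. Passing to a reductive quotient over the residue field does not help, because the stabilizer of a general $y\in\brI\dot w\brI$ is not parahoric. The auxiliary claim $i(\nu)_{\min}\cap W=\nu_{\min}$ is likewise unavailable, for the length-function reason above.

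The paper sidesteps all of this with a one-line partition argument (this is the content of the proof of Theorem~\ref{main-tt}, to which the proof of Theorem~\ref{5-5} simply refers). By Theorem~\ref{dec} the sets $[\br\nu]^\flat$ partition $\brG$, so their intersections with $G$ partition $G$; by Theorem~\ref{newtonG} the $G(\nu)$ also partition $G$. The forward inclusion gives $G(\nu)\subseteq G\cap[i(\nu)]^\flat$ for every $\nu$, and injectivity of $i$ (Theorem~\ref{inject}) makes the right-hand sides pairwise disjoint. A partition of $G$ contained termwise in another partition of $G$ must coincide with it, so $G(\nu)=G\cap[i(\nu)]^\flat$. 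No descent, no cocycle trivialization, and no comparison of minimal-length sets over $F$ and over $\brF$ is needed.
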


In the main text of the paper, we also consider the Newton strata of $G$ for the twisted conjugation action and we obtain a similar geometric interpretation.

\smallskip
\smallskip

\noindent \ \ {\bf Acknowledgements:} We thank T. Haines, G. Lusztig and M. Rapoport for helpful discussions. 

\section{The $p$-adic groups and their Iwahori-Weyl groups}\label{setup}

\subsection{The Iwahori-Weyl group over $F$}
Fix a maximal $F$-split torus $\bA$ of $\bG$. Let $\bM$ and $\bN$ be the centralizer and normalizer of $\bA$ respectively. Let $$W_0=\bN(F) / \bM(F), \qquad W=\bN(F) / \bM(F)_1$$ be its relative Weyl group and Iwahori-Weyl group respectively, where $\bM(F)_1$ is the unique parahoric subgroup of $M(F)$. Let $\sB = \sB(\bG, F)$ be the enlarged Bruhat-Tits building and let $\sA=\sA(\bG, \bA, F)$ be the apartment of $\sB$ corresponding to $\bA$. Fix an alcove $\fka_C \subseteq \sA$, and denote by $I \subseteq G$ the associated Iwahori subgroup. Then we have $I \backslash G / I = W$. Let $G_1$ be the subgroup of $G$ generated by all the parahoric subgroups. The affine Weyl group of $G$ is given by $W_a=N_1 / (N_1 \cap I)$ with $N_1\ = G_1 \cap \bN(F)$. We have $$W = W_a \rtimes \Omega,$$ where $\Omega$ is the stabilizer of $\fka_C$. We denote by $\BS$ the set of simple reflections of $W_a$ and by $\ell$ the length function on $W$.

\subsection{The Iwahori-Weyl group over $\brF$} Let $\bS$ be a maximal $\brF$-split torus which is defined over $F$ and contains $\bA$. Let $\bsA$ be the apartment corresponding to $\bS$ over $\brF$. The Frobenius automorphism $\s$ of $\brF / F$ acts on $\bsA$, and there is a natural embedding $\sA \hookrightarrow \bsA$, which identifies $\sA$ with $\bsA^\s$. Let $\bT$ be the centralizer of $\bS$, and let $\bN_{\bS}$ be the normalizer of $\bS$. Let $$\brW_0=\bN_{\bS}(\brF) / \bT(\brF), \qquad \brW=\bN_{\bS}(\brF) / \bT(\brF)_1$$ be its Weyl group and Iwahori-Weyl group respectively, where $\bT(\brF)_1$ is the unique parahoric subgroup of $\bT(\brF)$. Fix an $\s$-stable alcove $\breve \fka_C \subseteq \bsA$ containing $\fka_C$, and denote by $\brI \subseteq \brG$ the associated Iwahori subgroup. Then $I=\brI^\s$ and $\brI \backslash \brG / \brI = \brW$. Let $\brG_1$ be the subgroup of $\brG$ generated by all parahoric subgroups. The affine Weyl group of $\brG$ is given by $\brW_a= \brN_1 / (\brN_1 \cap \brI)$ with $\brN_1 = \brG_1 \cap \bN_{\bS}(F)$. We have $$\brW = \brW_a \rtimes \breve\Omega,$$ where $\breve\Omega$ is the stabilizer of $\breve\fka_C$. We denote by $\breve\BS$ the set of simple reflections of $\brW_a$ and by $\breve \ell$ the length function on $\brW$.

The following relation between the Iwahori-Weyl groups over $F$ and over $\brF$ is proved in \cite[Lemma 1.6, Corollary 1.7 \& Proposition 1.11]{Ri}.

\begin{proposition}\label{length-add} We have that $W=\brW^\s$, $W_a =\brW_a^\s$ and $\Omega = \breve \Omega^\s$. Moreover, for $x, y \in W$, if $\ell(xy)=\ell(x)+\ell(y)$, then $\breve \ell(xy)=\breve \ell(x)+\breve \ell(y)$.
\end{proposition}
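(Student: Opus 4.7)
The proposition splits into two claims—the identification of the $\s$-fixed subgroups and the compatibility of length functions—and I would address them separately. For the first, the plan is Galois descent starting from the exact sequence
$$1 \to \bT(\brF)_1 \to \bN_\bS(\brF) \to \brW \to 1.$$
Given $n \in \bN(F)$, conjugation by $n$ preserves $\bM = Z_\bG(\bA)$ and hence sends $\bS \subseteq \bM$ to another maximal $\brF$-split torus of $\bM$; since such tori are $\bM(\brF)$-conjugate, after multiplying by a suitable element of $\bM(\brF)$ we obtain an element of $\bN_\bS(\brF)$ that is $\s$-fixed modulo $\bT(\brF)_1$. The bijectivity of the resulting map $W \to \brW^\s$ reduces to two inputs: first, the identification $\bN(F) \cap \bT(\brF)_1 = \bM(F)_1$, obtained by comparing the smooth connected $\CO$-models underlying these parahoric subgroups; second, the vanishing of $H^1(\langle\s\rangle, \bT(\brF)_1)$, which follows from Lang's theorem applied to the pro-smooth connected group scheme underlying $\bT(\brF)_1$. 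The equality $\Omega = \breve\Omega^\s$ then follows from the fact that $\breve\fka_C$ is the unique $\s$-stable alcove of $\bsA$ containing $\fka_C$, so the stabilizers match; finally, taking $\s$-fixed points of $1 \to \brW_a \to \brW \to \breve\Omega \to 1$ and invoking the surjectivity of the natural projection $W \twoheadrightarrow \Omega$ yields $W_a = \brW_a^\s$.

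For length-additivity, the plan is to realize both $\ell$ and $\breve\ell$ geometrically via wall crossings. Let $R_F(x)$ denote the set of $F$-affine walls of $\sA$ separating $\fka_C$ from $x\fka_C$, so $\ell(x) = |R_F(x)|$, and let $R_\brF(x)$ denote the analogous set of $\brF$-affine walls of $\bsA$ separating $\breve\fka_C$ from $x\breve\fka_C$, so $\breve\ell(x) = |R_\brF(x)|$. An $\brF$-affine wall either restricts nontrivially to $\sA$ (type (i), yielding an $F$-affine wall by restriction) or contains $\sA$ entirely in its zero locus (type (ii)). The crucial geometric step is to show that for $x \in W$ no type (ii) wall belongs to $R_\brF(x)$: such a wall $\tilde H$ contains $\sA$, so both $\s$-stable alcoves $\breve\fka_C$ and $x\breve\fka_C$ meet $\tilde H$ only along $\fka_C$ and $x\fka_C$ respectively, and the uniqueness of the $\s$-stable alcove over a given $F$-alcove forces them onto the same side of $\tilde H$. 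Granting this, restriction to $\sA$ induces a $W$-equivariant surjection $\rho \colon R_\brF(x) \twoheadrightarrow R_F(x)$ whose fibers are $\s$-orbits of type (i) walls.

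The conclusion is then immediate from the standard wall-crossing identities $R_\brF(xy) = R_\brF(x) \triangle x R_\brF(y)$ and $R_F(xy) = R_F(x) \triangle x R_F(y)$. The hypothesis $\ell(xy) = \ell(x) + \ell(y)$ is equivalent to $R_F(x) \cap x R_F(y) = \emptyset$; were there some $\tilde H \in R_\brF(x) \cap x R_\brF(y)$, the $W$-equivariance of $\rho$ would place $\rho(\tilde H)$ in $R_F(x) \cap x R_F(y)$, a contradiction. Hence $R_\brF(x) \cap x R_\brF(y) = \emptyset$, and $\breve\ell(xy) = \breve\ell(x) + \breve\ell(y)$. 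The main obstacle will be the geometric exclusion of type (ii) walls from $R_\brF(x)$ for $x \in W$: this is the only step requiring genuine input from Bruhat–Tits theory, and it hinges on the uniqueness of the $\s$-stable alcove containing a given $F$-alcove, a point that demands careful analysis in the non-quasi-split setting.
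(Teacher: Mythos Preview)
The paper does not prove this proposition; it simply cites \cite[Lemma~1.6, Corollary~1.7 \& Proposition~1.11]{Ri}. Your plan for the identifications $W=\brW^\s$, $W_a=\brW_a^\s$, $\Omega=\breve\Omega^\s$ via Galois descent and the vanishing of $H^1(\langle\s\rangle,\bT(\brF)_1)$ is the standard argument and matches Richarz.

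For the length-additivity, your wall-crossing strategy is sound, but the dichotomy into types (i) and (ii) is where the argument goes wrong. An $\brF$-wall $\tilde H$ actually falls into one of \emph{three} cases: (i) $\tilde H\cap\sA$ is a hyperplane in $\sA$; (ii) $\tilde H\supseteq\sA$; (iii) $\tilde H\cap\sA=\emptyset$. You omit (iii), and in fact (ii) \emph{never occurs}: since $\breve\fka_C$ is open, convex and $\s$-stable, averaging any interior point over the finite group $\langle\s\rangle$ produces a $\s$-fixed interior point, so $\sA=\bsA^\s$ meets the \emph{interior} of $\breve\fka_C$ and hence lies in no wall. Thus your ``main obstacle'' is vacuous, and the uniqueness of the $\s$-stable alcove over a given $F$-alcove is a \emph{consequence} of this averaging, not a tool for proving the exclusion. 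The walls that genuinely must be excluded from $R_\brF(x)$ are those of type (iii), which do exist whenever $\bG$ is not quasi-split (they come from roots of $\bM=Z_\bG(\bA)$ with respect to $\bS$). Their exclusion is easy once one has the averaging argument: every $\s$-stable $\brF$-alcove meets $\sA$ in its interior, and $\sA$ lies entirely on one side of any type (iii) wall, so such a wall cannot separate $\breve\fka_C$ from $x\breve\fka_C$ when $x\in W$. With this correction your final paragraph goes through verbatim, since $\rho$ is then well-defined on all of $R_\brF(x)$.

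Richarz packages the same geometry differently: he shows each $s\in\BS$, viewed in $\brW$, is the longest element $w_0(\breve K)$ of the parabolic generated by a single $\s$-orbit $\breve K\subseteq\breve\BS$, and then deduces additivity from standard Coxeter-group facts about multiplication by longest elements of parabolics. The key input---that the $\s$-orbits in $\breve\BS$ exhaust $\breve\BS$ and each generates a finite group---is equivalent to the statement that $\sA$ meets the interior of $\breve\fka_C$, so the two approaches rest on the same fact.
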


\section{Straight elements and straight conjugacy classes} \label{straight}

\subsection{Definition} In this section, we work with an arbitrary extended affine Weyl group $W$. Let $\ell$ be the length function on $W$ and $\BS$ be the set of simple reflections. We may write $W$ as $$W= Y \rtimes W_0=\{t^\l w; \l \in Y, w \in W_0\},$$ where $W_0$ is a finite Weyl group and $Y$ is a finitely generated abelian group containing the coroot lattice $\BZ\Phi^\vee$ for $W_0$. Moreover, we have $W=W_a \rtimes \Omega$, where $W_a=\BZ\Phi^\vee \rtimes W_0$ and $\Omega \cong Y / \BZ\Phi^\vee$ is the set of length zero elements. Let $\BS_0=\BS \cap W_0$ be the set of simple reflections in $W_0$.

Let $\th$ be a length-preserving automorphism of $W$ of finite order. We define the $\th$-twisted conjugation action on $W$ by $w \cdot_\th w'=w w' \th(w) \i$ for $w, w' \in W$. We denote by $W/_\th \, W$ the set of $\th$-twisted conjugacy classes of $W$.

An element $x$ of $W$ is called {\it $\th$-straight} if $\ell(x \th(x) \cdots \th^{n-1}(x)) = n \ell(x)$ for $n \in \BZ_{\ge 0}$. A $\th$-twisted conjugacy class of $W$ is called {\it straight} if it contains some $\th$-straight elements. We denote by $W \sslash_\th  W$ the set of straight $\th$-twisted conjugacy classes of $W$.

\subsection{Two arithmetic invariants}
Let $\k: W \to W/W_a \cong \Omega$ be the natural projection. Let $\Omega_{\th} = \Omega / (1-\th)\Omega$ be the set of $\th$-coinvariants. Let $$\k_{\th}: W \to \Omega \to \Omega_{\th}$$ be the composition of $\k$ with the projection map $\Omega \to \Omega_{\th}$. We call this map $\k_\th$ the {\it Kottwitz map}.

Let $Y_\BQ=Y \otimes_{\BZ} \BQ$ and $Y_\BQ^+$ be the set of dominant elements in $Y_\BQ$. Let $w \in W$. Since $\th$ is of finite order and $W_0$ is a finite group, there exists a positive integer $n$ such that $\th^n=1$ and $w \th(w) \cdots \th^{n-1}(w)=t^\l$. We set $\nu_{w, \th}=\l/n \in Y_\BQ$. It is easy to see that $\nu_{w, \th}$ is independent of the choice of $n$. Moreover, we denote by $\bar \nu_{w, \th} \in Y_\BQ^+$ the unique dominant element in the $W_0$-orbit of $\nu_{w, \th}$. Then the map $$W \to Y_\BQ^+, \qquad w \mapsto \bar \nu_{w, \th}$$ is constant on each $\th$-twisted conjugacy class of $W$. We call this map the {\it Newton map}.

We define a map $$\pi_{\th}: W \to \Omega_{\th} \times Y_\BQ^+, \qquad w \mapsto (\k_{\th}(w), \bar \nu_{w, \th}).$$ Then $\pi_{\th}$ factors through a map $W/_\th \, W \to \Omega_\th \times Y_\BQ$ which we still denote by $\pi_{\th}$.

Note that the map $\pi_{\th}: W/_\th \, W \to \Omega_{\th} \times Y_\BQ^+$ is not injective. For example, if $\th$ acts trivially on $W$, then all the conjugacy classes that intersects $W_0$ maps to $(1, 0) \in \Omega \times Y_\BQ^+$. However, this map has a nice section.

\begin{theorem}\cite[Theorem 3.3]{HN}\label{gamma}
The map $\pi_{\th}$ induces a bijection from $W \sslash_{\th} W$ to $\Im(\pi_{\th})$.
\end{theorem}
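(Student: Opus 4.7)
The plan is to prove both surjectivity and injectivity of the induced map $\pi_\th \colon W \sslash_\th W \to \Im(\pi_\th)$, by combining a length-theoretic characterization of $\th$-straight elements with a reduction to a Levi subgroup determined by the Newton point.

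The key preparatory step I would establish is the length identity: an element $x \in W$ is $\th$-straight if and only if $\ell(x) = \langle \bar\nu_{x,\th}, 2\rho \rangle$, where $\rho$ denotes the half-sum of the positive roots of $W_0$. This can be shown by writing $x = t^\lambda w$, applying the standard length formula in the extended affine Weyl group to the iterated product $x\th(x)\cdots\th^{n-1}(x)$, which for suitable $n$ equals the pure translation $t^{n\bar\nu_{x,\th}}$ (after $W_0$-conjugation), and observing that the straightness condition $\ell(x\th(x)\cdots\th^{n-1}(x)) = n\ell(x)$ forces the relevant pairings $\langle \lambda + w\th(\lambda) + \cdots, \alpha\rangle$ to share consistent signs, pinning the length down to $n\langle\bar\nu_{x,\th}, 2\rho\rangle$.

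For surjectivity, given $(\bar\k, \nu) \in \Im(\pi_\th)$, let $J = \{s \in \BS_0 : s(\nu) = \nu\}$ and work inside the Levi extended affine Weyl group $\tilde W_J = Y \rtimes W_J$, in which $\nu$ is central. A direct construction, choosing a length-zero twist $\tau \in \Omega$ projecting to $\bar\k$ with the appropriate translation component to realize the Newton point $\nu$, produces a $\th$-straight representative whose length saturates the bound $\langle\nu, 2\rho\rangle$. For injectivity, suppose $x, y \in W$ are $\th$-straight with $\pi_\th(x) = \pi_\th(y) = (\bar\k, \nu)$. I would apply the partial $\th$-conjugation method: repeatedly $\th$-twisted-conjugate by simple reflections $s \in \BS_0 \setminus J$ to drive the finite Weyl component into $W_J$. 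The minimality $\ell(x) = \langle\nu, 2\rho\rangle$ forbids any strict length decrease, so the procedure terminates with the element in $\tilde W_J \cdot \Omega$. Within this Levi, $\nu$ is central, so $\th$-straight classes with Newton point $\nu$ are classified by the $\th$-coinvariants $\Omega_J/(1-\th)\Omega_J$, and naturality of the Kottwitz map for the embedding $\tilde W_J \hookrightarrow W$ translates equality of Kottwitz invariants into $\th$-conjugacy of $x$ and $y$.

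The main obstacle is the partial conjugation step in the injectivity argument: a single $\th$-twisted conjugation by $s \in \BS_0 \setminus J$ couples the translation and finite Weyl parts in a nontrivial way, so one must prove that among all $\th$-conjugates of $x$ of minimal length $\langle\nu, 2\rho\rangle$, some representative actually lies in $\tilde W_J \cdot \Omega$. This requires a careful iterative argument exploiting the saturation $\ell(x) = \langle\bar\nu_{x,\th}, 2\rho\rangle$ at every intermediate stage, while simultaneously tracking how the Kottwitz invariant transforms so that the final canonical form carries the prescribed class $\bar\k$.
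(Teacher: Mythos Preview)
The paper does not prove Theorem~\ref{gamma} directly---it is cited from \cite{HN}---but it does give the analogous argument for Theorem~\ref{partial} and states that the two proofs are similar, so that is the proof to compare against. Your overall architecture (reduce to the Levi $\tilde W_J$ attached to the Newton point, then classify there) matches the paper's. The length identity $\ell(x)=\langle\bar\nu_{x,\th},2\rho\rangle$ for $\th$-straight $x$ is correct and is indeed the standard characterization.

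However, the ``main obstacle'' you flag is self-imposed. You propose to reach $\tilde W_J$ by iterated $\th$-conjugation by reflections $s\in\BS_0\setminus J$ while keeping the element $\th$-straight at each step; that coupling of translation and finite parts is genuinely awkward. The paper's route is much simpler: after arranging $\th(\BS_0)=\BS_0$, conjugate once by an element of $W_0$ to make $\nu_{x,\th}$ dominant. Then $w\th(v)=v$ with $\th(v)=v$ forces $w\in W_K$ automatically, so $x=t^\lambda w$ already lies in $Y\rtimes W_K$. No iterative procedure is needed, and there is no need to preserve straightness along the way---$\th$-conjugacy of the (possibly non-straight) reduced elements suffices.

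The place where your sketch has a genuine gap is the final step inside the Levi. You assert that $\th$-straight classes in $\tilde W_J$ with central Newton point $\nu$ are classified by $\Omega_J/(1-\th)\Omega_J$, but this is essentially the statement to be proved and does not follow from centrality alone. The paper handles it concretely: writing $\lambda'-\lambda=a+b$ with $a\in\BZ\Phi_K^\vee$ and $b\in\BZ\Phi_L^\vee$, equality of Newton points forces $\sum_i\th^i(b)=0$, hence $b$ is a $\th$-coboundary in $\BZ\Phi_L^\vee$ and can be conjugated away by a translation; one is then reduced to $\lambda'-\lambda\in\BZ\Phi_K^\vee$, and \cite[Proposition~3.2]{HN} finishes. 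Your surjectivity construction is also too vague: ``choosing a length-zero twist $\tau\in\Omega$ with the appropriate translation component'' does not explain why the resulting element has length exactly $\langle\nu,2\rho\rangle$ in $W$ (not just in $\tilde W_J$). The paper instead gets surjectivity for free from Theorem~\ref{min}(1): every $w$ satisfies $w\to_\th ux$ with $x$ $\th$-straight and $\pi_\th(w)=\pi_\th(x)$.
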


\subsection{} Now we study $W/_\th \, W_a$, the set of $W_a$-orbits on $W$ for the $\th$-twisted conjugation action.

A $W_a$-orbit on $W$ is called {\it $\th$-straight} if it contains a $\th$-straight element. We denote by $W \sslash_{\th} W_a$ the set of straight $W_a$-orbits on $W$. Then we have the following Cartesian diagram
\[
\xymatrix{
W \sslash_{\th} W_a \ar@{^{(}->}[r] \ar@{->>}[d] & W/_\th \, W_a \ar@{->>}[d] \\
W \sslash_{\th} W \ar@{^{(}->}[r] & W/_\th \, W.
}
\]

Define a map $$\pi_\th^\flat: W \to \Omega \times Y_\BQ^+, \qquad w \mapsto (\k(w), \bar \nu_{w, \th}).$$ This map factors through a map $W/_\th \, W_a \to \Omega \times Y_\BQ^+$ which we still denote by $\pi_\th^\flat$.

We have

\begin{theorem} \label{partial}
The map $\pi_\th^\flat$ induces a bijection from  $W \sslash_{\th} W_a$ to $\Im(\pi_\th^\flat)$.
\end{theorem}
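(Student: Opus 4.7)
The plan is to deduce Theorem \ref{partial} from Theorem \ref{gamma} together with the minimal-length connectivity results for $\th$-conjugacy classes established in \cite{HN}.

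First I would check the formalities. The map $\pi_\th^\flat$ descends to $W/_\th\, W_a$ because for $u\in W_a$ and $x\in W$, $\kappa(u\,x\,\th(u)^{-1})=\kappa(u)+\kappa(x)-\th(\kappa(u))=\kappa(x)$ in the abelian group $\Omega$ (as $\kappa(u)=0$), while the Newton point is manifestly conjugation-invariant. Surjectivity onto $\Im(\pi_\th^\flat)$ is then tautological. For injectivity, let $x,y$ be $\th$-straight with $\pi_\th^\flat(x)=\pi_\th^\flat(y)$. Composing $\kappa$ with the projection $\Omega\twoheadrightarrow\Omega_\th$ gives $\pi_\th(x)=\pi_\th(y)$, so by Theorem \ref{gamma} there exists $g\in W$ with $y=g\,x\,\th(g)^{-1}$. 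Decomposing $g=u\tau$ with $u\in W_a$ and $\tau\in\Omega$, applying $\kappa$ yields $\kappa(y)=\kappa(x)+(1-\th)\tau$, and the hypothesis $\kappa(x)=\kappa(y)$ forces $\tau\in\Omega^\th$. Injectivity is therefore equivalent to the following key claim: for every $\th$-straight element $x\in W$ and every $\tau\in\Omega^\th$, the elements $x$ and $\tau x\tau^{-1}=\tau\cdot_\th x$ are $\th$-conjugate under $W_a$.

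To prove the key claim, observe that both $x$ and $\tau x \tau^{-1}$ are $\th$-straight elements of the same $\th$-conjugacy class $\CO$. Since $\tau\in\Omega$ has length zero and conjugation by $\tau$ acts as a symmetry of the extended Dynkin diagram, it preserves the length function on $W$, so $\ell(\tau x \tau^{-1})=\ell(x)$. As $\th$-straight elements have minimal length in their $\th$-conjugacy class, both $x$ and $\tau x\tau^{-1}$ are minimal-length elements of $\CO$. The minimal-length connectivity theorem of \cite{HN} then provides a sequence $x = x_0, x_1, \ldots, x_k = \tau x \tau^{-1}$ of minimal-length elements of $\CO$ with $x_{i+1} = s_i\cdot_\th x_i$ for some simple reflection $s_i\in\BS$. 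Since each $s_i$ lies in $W_a$, composing these moves realises $\tau x\tau^{-1}$ as $u\cdot_\th x$ for some $u\in W_a$, proving the claim.

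The main obstacle is the invocation of this minimal-length connectivity result: one needs that any two minimal-length elements of a single $\th$-conjugacy class of $W$ are connected by a sequence of length-preserving $\th$-conjugations by simple reflections from $\BS$. This is available in \cite{HN} and already underlies the proof of Theorem \ref{gamma}; granted it, the rest of the argument is elementary bookkeeping with the Kottwitz invariant $\kappa$.
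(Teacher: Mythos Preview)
Your reduction of injectivity to the key claim is correct, but the proof of the key claim has a genuine gap. The ``minimal-length connectivity theorem'' you invoke from \cite{HN} is not what that paper proves: He--Nie show that two minimal-length elements of a $\th$-conjugacy class are related by $\tilde\approx_\th$, the equivalence generated by simple-reflection moves \emph{together with} conjugation by elements of $\Omega$, not by $\approx_\th$ alone. Indeed, $\approx_\th$ preserves the invariant $\kappa$, so pure simple-reflection connectivity of all minimal-length elements would force $\kappa$ to be constant on each $\th$-conjugacy class --- but this is false. A concrete counterexample is the one in Example~\ref{ex-th-s}: with $\brW=\brW'\times\brW'$ and $\s$ swapping the factors, the length-zero elements $(1,1)$ and $(\omega,\omega)$ lie in the same $\s$-conjugacy class, are both minimal length, yet have distinct $\kappa$-values and therefore cannot be joined by simple-reflection $\s$-conjugations. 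Correspondingly, the version recorded here as Theorem~\ref{min}(2) is stated only for $\th$-straight elements that are \emph{already} $\th$-conjugate under $W_a$; invoking it to prove that your $x$ and $\tau x\tau^{-1}$ are $W_a$-conjugate is circular. Using the correct $\tilde\approx_\th$ result only yields $x\approx_\th \tau_0(\tau x\tau^{-1})\th(\tau_0)^{-1}$ for \emph{some} $\tau_0\in\Omega$, which does not obviously give what you need for your specific $\tau$.

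There is also a smaller gap: surjectivity onto $\Im(\pi_\th^\flat)$ is not tautological, since $\Im(\pi_\th^\flat)$ denotes the image of the map from all of $W$, not just from the straight $W_a$-orbits. One must check that every $w\in W$ shares its $\pi_\th^\flat$-value with a $\th$-straight element; this follows from Theorem~\ref{min}(1), as in the paper's proof. For injectivity the paper takes a different route entirely: rather than passing through Theorem~\ref{gamma} and connectivity, it writes $x=t^{\lambda}w$, $x'=t^{\lambda'}w'$, normalises so the common Newton point $v$ is dominant, splits $\lambda'-\lambda$ along the Levi decomposition determined by the stabiliser of $v$, kills the non-Levi part by an explicit $\th$-conjugation by a translation, and then appeals to \cite[Proposition~3.2]{HN} inside the Levi. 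That argument does the real work your proposal delegates to a connectivity statement that is not available.
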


The proof is similar to the proof of \cite[Theorem 3.3]{HN}. We provide a proof here for completeness.

\subsection{Minimal length elements} Before we come to the proof of Theorem \ref{partial}, we recall some results on the minimal length elements of $\th$-twisted conjugacy classes.

Let $w, w' \in W$. We write $w \overset s \to_\th w'$ for some $s \in \BS$ if $w'=s w \th(s)$ and $\ell(w') \le \ell(w)$. Write $w \to_\th w'$ if there exits $s_1, \dots, s_r$ such that $w \overset {s_1} \to_\th \cdots \overset {s_r} \to_\th w'$. Write $w \approx_\th w'$ if $w \to_\th w'$ and $w' \to_\th w$.

We denote by $W_{\th-\min}$ the set of elements in $W$ that are of minimal length in their $\th$-twisted conjugacy classes of $W$.

The following result is proved in \cite[Theorem A \& Proposition 2.7]{HN}.

\begin{theorem}\label{min}
(1) Let $w \in W$. Then there exist a subset $J \subseteq \BS$ with $W_J$ finite, a $\th$-straight element $x \in {}^J W {}^{\th(J)}$, and an element $u \in W_J$ such that $x \th(J) x\i = J$, $u x \in W_{\th-\min}$ and $w \to_\th u x$.

(2) Let $w, w' \in W$ be two $\th$-straight elements which are $\th$-twisted conjugate under $W_a$. Then $w \approx_\th w'$.
\end{theorem}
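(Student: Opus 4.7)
The plan is to mirror the proof of [HN, Theorem 3.3] (= Theorem \ref{gamma}), tracking the Kottwitz invariant in $\Omega$ rather than in the quotient $\Omega_\th$.

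First I would check well-definedness of $\pi_\th^\flat$ on $W/_\th W_a$: the Newton map $\bar\nu_{-,\th}$ is already a $W$-$\th$-invariant by construction, while for $a \in W_a$ and $w \in W$ one has $\kappa(a \cdot_\th w) = \kappa(a) + \kappa(w) - \kappa(\th(a)) = \kappa(w)$, since $\kappa|_{W_a} = 0$. Surjectivity of $\pi_\th^\flat$ onto $\Im(\pi_\th^\flat)$ is tautological, so the content is the injectivity of the restriction to $W \sslash_\th W_a$.

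For injectivity, suppose $\sO_1, \sO_2 \in W \sslash_\th W_a$ have the same $\pi_\th^\flat$-image, and pick $\th$-straight representatives $x_1 \in \sO_1$ and $x_2 \in \sO_2$. Composing with the projection $\Omega \twoheadrightarrow \Omega_\th$ gives $\pi_\th(x_1) = \pi_\th(x_2)$, so by Theorem \ref{gamma} one has $x_2 = w \cdot_\th x_1$ for some $w \in W$. Writing $w = a \tau$ with $a \in W_a$ and $\tau \in \Omega$ via $W = W_a \rtimes \Omega$, one gets $x_2 = a \cdot_\th (\tau \cdot_\th x_1)$, so $\tau \cdot_\th x_1 \in \sO_2$. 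Comparing $\kappa(x_2) = \kappa(x_1) + (1-\th)\kappa(\tau)$ with the hypothesis $\kappa(x_1) = \kappa(x_2)$ forces $(1-\th)\kappa(\tau) = 0$ in the abelian group $\Omega \cong Y/\BZ\Phi^\vee$, hence $\tau \in \Omega^\th$. In particular $\th(\tau) = \tau$, so $\tau \cdot_\th x_1 = \tau x_1 \tau^{-1}$ is again $\th$-straight of the same length as $x_1$ (conjugation by the $\th$-fixed length-zero element $\tau$ preserves length and satisfies $(\tau y \tau^{-1})^{(n)} = \tau y^{(n)} \tau^{-1}$).

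The problem therefore reduces to the following key claim: for any $\th$-straight element $y$ and any $\tau \in \Omega^\th$, the elements $y$ and $\tau y \tau^{-1}$ are $W_a$-$\th$-conjugate. My plan for this step is to apply Theorem \ref{min}(1) to each of $y$ and $\tau y \tau^{-1}$. Since both are $\th$-straight (hence of minimal length in their common $W$-$\th$-conjugacy class), a length-balance argument forces the $W_J$-part of each reduction to vanish, yielding $y \approx_\th x$ and $\tau y \tau^{-1} \approx_\th x'$ for canonical $\th$-straight elements $x \in {}^J W^{\th(J)}$ and $x' \in {}^{J'} W^{\th(J')}$ satisfying $x \th(J) x^{-1} = J$ and $x' \th(J') x'^{-1} = J'$. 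Using that $\tau \in \Omega^\th$ acts on $\BS$ as a $\th$-equivariant diagram automorphism, one matches the two canonical forms inside a common $W_a$-$\th$-orbit; combined with Theorem \ref{min}(2), this gives $x \sim_{W_a,\th} x'$ and hence $y \sim_{W_a,\th} \tau y \tau^{-1}$. This combinatorial matching, parallel to the corresponding step in the proof of [HN, Theorem 3.3], is the technical core of the argument and will be the main obstacle.
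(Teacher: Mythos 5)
Your proposal does not address the statement it was supposed to prove. The statement in question is Theorem~\ref{min}, a structural result about minimal-length elements and straight elements in twisted conjugacy classes of extended affine Weyl groups; in the paper it is not proved at all but is quoted verbatim from \cite[Theorem A \& Proposition 2.7]{HN}. What you have written instead is an outline of a proof of Theorem~\ref{partial} (the statement that $\pi_\th^\flat$ induces a bijection from $W \sslash_\th W_a$ onto $\Im(\pi_\th^\flat)$): everything you discuss --- well-definedness of $\pi_\th^\flat$ on $W/_\th W_a$, surjectivity onto $\Im(\pi_\th^\flat)$, and the injectivity on straight classes --- is the content of Theorem~\ref{partial}, not of Theorem~\ref{min}.

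Worse, your argument explicitly \emph{invokes} Theorem~\ref{min}(1) and Theorem~\ref{min}(2) as tools (``My plan for this step is to apply Theorem~\ref{min}(1) to each of $y$ and $\tau y \tau^{-1}$ \dots\ combined with Theorem~\ref{min}(2) \dots''). If Theorem~\ref{min} is the thing to be proved, this is circular. If instead you meant to prove Theorem~\ref{partial}, you have simply misread which statement was assigned. Either way the attempt does not engage with the asserted result. For the record, even as a proof of Theorem~\ref{partial} your route differs from the paper's: the paper's argument in \S 2.5 reduces to the dominant Newton point $v$, splits $\BS_0$ into $K = \{s : s(v)=v\}$ and $L = \BS_0 \setminus K$, and uses \cite[Proposition 3.2]{HN} to conjugate into a length-zero element of $Y \rtimes W_K$ under $\BZ\Phi_K^\vee \rtimes W_K$, whereas you propose to reduce modulo $\Omega^\th$ and then prove a ``key claim'' about $W_a$-$\th$-conjugacy of $y$ and $\tau y \tau^{-1}$ by a combinatorial matching of canonical forms that you yourself flag as the unresolved technical core.
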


\subsection{Proof of Theorem \ref{partial}}

Let $w \in W$ and let $J, x, u$ be as in Theorem \ref{min} (1) such that $w \to_\th u x$. As $x \th(J) x\i = J$ and $W_J$ is finite, we have $\nu_{x, \th} = \nu_{ux, \th}$. Therefore, $\pi_\th^\flat(w) = \pi_\th^\flat (u x) = \pi_\th^\flat (x)$ and hence $\pi_\th^\flat (W \sslash_\th W_a) = \Im(\pi_\th^\flat)$.

Let $x, x' \in W$ be two elements, which are $\th$-conjugate by $W_a$ to some $\th$-straight elements such that $\pi_\th^\flat (x) = \pi_\th^\flat (x')$. We show that $x, x'$ are $\th$-conjugate by $W_a$. First note that $\k(x) = \k(x')$. Let $\BS_0 \subset \BS$ be the set of simple reflections of $W_0$. Notice that $\th =\Ad(\o) \circ \th'$ for some $\o \in \Omega$ and some automorphism $\th'$ of $W$ with $\th'(\BS_0)=\BS_0$. Thus, by replacing triple $(x, x', \th)$ with $(x\o, x'\o, \th')$, we can assume further that $\th(\BS_0) = \BS_0$. In particular, $\th$ preserves the set of simple coroots. Write $x = t^\l w$ and $x' \in t^{\l'} w'$ for some $\l, \l' \in Y$ and $w, w' \in W_0$. Up to $\th$-conjugation of $W_0$, we may assume that $\nu_{x, \th} = \nu_{x', \th} =: v$ is dominant.

Let $K = \{s \in \BS_0; s(v)=v\}$ and $L = \BS_0 - K$. We denote by $\Phi_K$ the root system of $K$ and by $\Phi_L$ the root system of $L$. As $w(\th(v)) = v = w'(\th(v))$, we deduce that $v = \th(v) = w(v) = w'(v)$. In particular, $w, w' \in W_K$ and $\th$ preserves the root systems $\Phi_K$ and $\Phi_L$. Moreover, as $\k(x) = \k(x')$, we have $\l' - \l = a + b$ for some $a \in \BZ \Phi_K^\vee$ and $b \in \BZ \Phi_L^\vee$. Choose $n \in \BZ_{\ge 1}$ such that $\th^n = (w\th)^n = (w' \th)^n = 1$. Then $$v = \nu_{x, \th} = \frac{1}{n} \sum_{i=1}^n (w \th)^i(\l)$$ and similarly, $$v = \nu_{x', \th} = \frac{1}{n} \sum_{i=1}^n (w \th)^i(\l') \in \frac{1}{n} \sum_{i=1}^n (w \th)^i(\l) + \frac{1}{n} \sum_{i=1}^n \th^i(b) + \BQ \Phi_K^\vee,$$ which means $\sum_{i=1}^n \th^i(b) = 0$ and hence $b = \l_1 - \th(\l_1)$ for some $\l_1 \in \BZ \Phi_L^\vee$. By replacing $x$ by $t^{\l_1} x t^{-\th(\l_1)}$, we can assume that $\l' - \l \in \BZ\Phi_K^\vee$. Thus, by \cite[Proposition 3.2]{HN}, $x, x'$ are $\th$-conjugate to the same length zero element of $Y \rtimes W_K$ under $\BZ \Phi_K^\vee \rtimes W_K$. So $x, x'$ are $\th$-conjugate by $W_a$ as desired.

\begin{corollary} \label{bij}
If $\th$ acts on $\Omega$ trivially, then the natural projection $W \sslash_\th W_a \to W \sslash_\th W$ is a bijection.
\end{corollary}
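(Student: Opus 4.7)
The plan is to exploit the commutativity of the two bijections in Theorems \ref{gamma} and \ref{partial}, noting that when $\th$ acts trivially on $\Omega$ the two target sets $\Omega_\th\times Y_\BQ^+$ and $\Omega\times Y_\BQ^+$ coincide, so the two arithmetic invariants $\pi_\th$ and $\pi_\th^\flat$ agree.

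First I would verify that the natural map $W\sslash_\th W_a \to W\sslash_\th W$ is well defined. A straight $W_a$-orbit contains a $\th$-straight element $x$; the $\th$-twisted conjugacy class of $x$ in $W$ then also contains $x$, hence is straight. Surjectivity is then immediate from Theorem \ref{min}(1) (or directly from the definition): every straight $\th$-twisted conjugacy class contains a straight element, whose $W_a$-orbit is a straight $W_a$-orbit mapping to it.

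For injectivity, suppose $O_1, O_2 \in W\sslash_\th W_a$ map to the same straight $\th$-twisted conjugacy class, and pick $\th$-straight representatives $x_i \in O_i$. Then $\pi_\th(x_1)=\pi_\th(x_2)$, i.e., $\k_\th(x_1)=\k_\th(x_2)$ and $\bar\nu_{x_1,\th}=\bar\nu_{x_2,\th}$. Under the assumption that $\th$ acts trivially on $\Omega$, one has $(1-\th)\Omega=0$, so $\Omega_\th=\Omega$ and the map $\k_\th$ coincides with the plain Kottwitz-type projection $\k$. Consequently $\pi_\th^\flat(x_1)=\pi_\th^\flat(x_2)$, and Theorem \ref{partial} forces $O_1=O_2$.

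There is essentially no obstacle here: the corollary is a clean consequence of putting Theorems \ref{gamma} and \ref{partial} side by side and observing that the vertical projection $\Omega\times Y_\BQ^+ \to \Omega_\th\times Y_\BQ^+$ on the target is the identity precisely under the hypothesis. The only thing one needs to double-check is that the square
\[
\xymatrix{
W\sslash_\th W_a \ar[r]^-{\pi_\th^\flat} \ar[d] & \Omega\times Y_\BQ^+ \ar[d] \\
W\sslash_\th W \ar[r]^-{\pi_\th} & \Omega_\th\times Y_\BQ^+
}
\]
commutes, which is tautological from the definitions of $\pi_\th$ and $\pi_\th^\flat$ and the fact that the Newton point depends only on the $\th$-conjugacy class.
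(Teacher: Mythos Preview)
Your proposal is correct and follows essentially the same approach as the paper's proof: surjectivity is immediate, and injectivity comes from observing that when $\th$ fixes $\Omega$ one has $\Omega_\th=\Omega$ and hence $\pi_\th=\pi_\th^\flat$, so the bijections of Theorems \ref{gamma} and \ref{partial} force the result. Your version is simply more explicit about well-definedness and the commutativity of the square, which the paper leaves implicit.
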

\begin{proof}
By definition, the projection $W \sslash_\th W_a \to W \sslash_\th W$ is surjective. Now we prove that the projection is injective. Since $\th$ acts on $\Omega$ trivially, we have $\pi_\th = \pi_\th^\flat$ as maps from $W$ to $\Omega \times Y_\BQ^+$. Now the statement follows from Theorem \ref{gamma} and Theorem \ref{partial}.
\end{proof}

\subsection{}\label{inv} Finally we discuss some variation of the above results, which will be used in the rest part of the paper.

Let $\G \subseteq \Omega$ be a $\th$-stable subgroup. Set $W(\G)=W_a \rtimes \G \subset W$. Let $W/_\th \, W(\G)$ be the set of $W(\G)$-orbits on $W$ for the $\th$-twisted conjugation action and $W  \sslash_{\th} W(\G) \subset W /_\th \, W(\G)$ be the set of straight $W(\G)$-orbits on $W$.

We set $\Omega_{\th, \G} = \Omega / (1-\th)\G$. Let $$\k_{\th, \G}: W \to \Omega \to \Omega_{\th, \G}$$ be the composition of $\k$ with the projection map $\Omega \to \Omega_{\th, \G}$.

We define a map $$\pi_{\th, \G}: W \to \Omega_{\th, \G} \times Y_\BQ^+, \qquad w \mapsto (\k_{\th, \G}(w), \bar \nu_{w, \th}).$$ Then $\pi_{\th, \G}$ factors through a map $W/_\th \, W(\G) \to \Omega_\th \times Y_\BQ$ which we still denote by $\pi_{\th, \G}$.

\begin{theorem} \label{Gamma}
	The map $\pi_{\th, \G}$ induces a bijection from  $W \sslash_{\th} W(\G)$ to $\Im(\pi_{\th, \G})$.
\end{theorem}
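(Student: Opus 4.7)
The plan is to deduce Theorem \ref{Gamma} from Theorem \ref{partial}, which is the case $\G = \{1\}$, via a cohomological adjustment by elements of $\G$. The argument splits naturally into showing that $\pi_{\th,\G}$ restricted to $W \sslash_\th W(\G)$ surjects onto $\Im(\pi_{\th,\G})$, and that it is injective on $W \sslash_\th W(\G)$.

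For surjectivity: given $w \in W$, invoke Theorem \ref{min}(1) to obtain a subset $J \subseteq \BS$ with $W_J$ finite, a $\th$-straight element $x$ with $x\th(J)x\i = J$, and $u \in W_J$ such that $w \to_\th ux$. Each elementary step in $\to_\th$ is twisted conjugation by a simple reflection $s \in \BS \subseteq W_a$, so $\k$ -- and hence $\k_{\th,\G}$ -- is preserved. The equality $\nu_{x,\th} = \nu_{ux,\th}$ then follows exactly as in the proof of Theorem \ref{partial}, using that $W_J$ is finite and $x\th(J)x\i = J$. Together these give $\pi_{\th,\G}(w) = \pi_{\th,\G}(x)$, so the image of $\pi_{\th,\G}$ is covered by straight $W(\G)$-orbits.

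For injectivity, take $x, x' \in W$ that are $\th$-straight with $\pi_{\th,\G}(x) = \pi_{\th,\G}(x')$. Then $\k(x) - \k(x') \in (1-\th)\G$, so choose $\g \in \G$ with $\g - \th(\g) = \k(x) - \k(x')$ and set $y = \g x' \th(\g)\i$. This $y$ lies in the $W(\G)$-orbit of $x'$, and remains $\th$-straight because $\g \in \Omega$ has length zero and conjugation by length-zero elements preserves the length function on $W$. By construction $\k(y) = \k(x)$, while the Newton map is invariant under $\th$-twisted conjugation, so $\bar\nu_{y,\th} = \bar\nu_{x,\th}$. Thus $\pi_\th^\flat(x) = \pi_\th^\flat(y)$, and Theorem \ref{partial} forces $x$ and $y$ into a common $W_a$-orbit, a fortiori into a common $W(\G)$-orbit. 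Therefore $x$ and $x'$ lie in the same $W(\G)$-orbit.

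I do not anticipate a serious obstacle: the theorem is a cohomological refinement of Theorem \ref{partial}, and the only genuinely new ingredient is the element $\g \in \G$ killing the discrepancy between $\k(x)$ and $\k(x')$ modulo $(1-\th)\G$. The one delicate bookkeeping point is the claim that conjugating by $\g$ preserves the $\th$-straight property, which rests on $\G \subseteq \Omega$ consisting of length-zero elements and on the fact that such conjugations preserve the length function on $W$.
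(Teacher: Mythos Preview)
Your proof is correct and follows essentially the same approach as the paper's. The paper organizes the surjectivity step via a commutative diagram relating $W\sslash_\th W_a$, $W\sslash_\th W(\G)$, and the target sets, while you spell it out directly using Theorem~\ref{min}(1); for injectivity, the paper works with straight $W_a$-orbits $\nu_1,\nu_1'$ inside the given $W(\G)$-orbits and conjugates one by an element of $\G$ to match $\k$-values before invoking Theorem~\ref{partial}, which is exactly your argument with $y=\g x'\th(\g)^{-1}$ phrased at the level of representatives rather than orbits.
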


\begin{remark}
(1)	If $\G=\Omega$, then $W(\G)=W$ and $\pi_{\th, \G}=\pi_\th$. If $\G=\{1\}$, then $W(\G)=W_a$ and $\pi_{\th, \G}=\pi_\th^\flat$. Thus Theorem \ref{Gamma} contains Theorem \ref{gamma} and Theorem \ref{partial} as special cases.

(2) Theorem \ref{Gamma} may be proved in a similar way as Theorem \ref{gamma} and Theorem \ref{partial}. In below, we deduce Theorem \ref{Gamma} directly from Theorem \ref{partial} instead.
\end{remark}

\begin{proof}
	Consider the following commutative diagrams
	\[
	\xymatrix{
		& W \sslash_{\th} W_a \ar@{_{(}->}[ld] \ar@{^{(}->}[rd] \ar@{->>}[dd] & \\
		W/_{\th} \, W_a  \ar[rr]^-{\pi^\flat_{\th}} \ar@{->>}[dd] & & \Omega \times Y_{\BQ}^+ \ar@{->>}[dd]^-{p_{\th, \G}} \\
		& W \sslash_{\th} W(\G) \ar@{_{(}->}[ld] \ar[rd] \\
		W/_{\th} W(\G) \ar[rr]^-{\pi_{\th, \G}}  & & \Omega_{\th, \G} \times Y_{\BQ}^+.
	}
	\]
	
	We have that $$\Im(\pi_{\th, \G})=\Im(p_{\th, \G} \circ \pi^\flat_{\th})=\Im(p_{\th, \G} \circ \pi^\flat_{\th} \mid_{W\sslash_{\th} W_a})=\Im(\pi_{\th, \G} \mid_{W\sslash_{\th} W(\G)}).$$
	
	On the other hand, suppose that $\nu, \nu' \in W \sslash_{\th} W(\G)$ have the same image under $\pi_{\th, \G}$. Let $\nu_1, \nu_1' \in W \sslash_\th W_a$ be two $\th$-straight classes contained in $\nu, \nu'$ respectively. As $\k_{\th, \G}(\nu) = \k_{\th, \G}(\nu')$, we have $\k(\nu_1)-\k(\nu_1') = -\o + \th(\o)$ for some $\o \in \Omega$. So by replacing $\nu_1'$ by $\o\i \nu_1' \th(\o)$, we can assume further that $\k(\nu_1) = \k(\nu_1')$ and hence $\pi_\th^\flat(\nu_1) = \pi_\th^\flat(\nu_1')$. By Theorem \ref{partial}, we have $\nu_1 = \nu_1'$ and hence $\nu = \nu'$ as desired.
\end{proof}

\section{The set $B(\bG)$ and some variations}\label{decomp}

\subsection{The set $B(\bG)$} Recall that $\s$ is the Frobenius morphism on $\brG$. We define the $\s$-twisted conjugation action on $\brG$ by $g \cdot_\s g'=g g' \s(g) \i$. Let $B(\bG)$ be the set of $\s$-twisted conjugacy classes of $\brG$. The $\s$-twisted conjugacy classes of $\brG$ are classified by Kottwitz in \cite{kottwitz-isoI} and \cite{kottwitz-isoII}.

\begin{theorem}
There is an embedding $$f: B(\bG) \to \breve\Omega_\s \times X_*(\bT)_\BQ^+, \qquad g \mapsto (\breve \k(g), \breve{\bar \nu}(g)).$$	
\end{theorem}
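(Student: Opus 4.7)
The plan is to factor the statement through the combinatorial framework of Section \ref{straight}. By \cite{He14}, each $\s$-conjugacy class $[g] \in B(\bG)$ meets $\bN_{\bS}(\brF)$, and in fact contains a lift of some $\s$-straight element of $\brW$; moreover, two $\s$-straight elements of $\brW$ whose lifts are $\s$-conjugate in $\brG$ are already $\s$-conjugate inside $\brW$. This gives a bijection $B(\bG) \cong \brW \sslash_\s \brW$. Composing with the map $\pi_\s \colon \brW \sslash_\s \brW \hookrightarrow \breve\Omega_\s \times X_*(\bT)_\BQ^+$ from Theorem \ref{gamma}, which is injective, produces the desired embedding provided we identify $\pi_\s$ with $(\breve\k, \breve{\bar\nu})$ on straight representatives.

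First I would check well-definedness: the Kottwitz map $\brG \to \breve\Omega$ factors through $\brG/\brG_1$ and is a group homomorphism, so its image in $\breve\Omega_\s = \breve\Omega/(1-\s)\breve\Omega$ is constant on $\s$-conjugacy classes; for the Newton map, one uses Kottwitz's original construction via slope morphisms $\BD \to \bG_{\brF}$ (respectively, slopes of $g\s$ on isocrystals when $F = \BQ_p$), which is manifestly $\s$-conjugation invariant up to $\brW_0$. Next I would verify that for a $\s$-straight $\tw \in \brW$ with lift $g \in \bN_{\bS}(\brF)$, we have $\breve\k(g) = \k(\tw) \bmod (1-\s)\breve\Omega$ and $\breve{\bar\nu}(g) = \bar\nu_{\tw,\s}$. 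The Kottwitz equality is immediate from the definition of $\breve\k$ via $\brG/\brG_1$ and the identification $\breve\Omega = \brW/\brW_a$. For the Newton equality, if $\tw \s(\tw)\cdots \s^{n-1}(\tw) = t^\l$, then $(g\s)^n = t^\l \s^n$ in $\brG \rtimes \langle\s\rangle$, and one shows that the resulting rational cocharacter $\l/n$ is exactly the slope vector of $g$.

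The main obstacle is the last step — matching the combinatorial $\bar\nu_{\tw,\s}$ with the geometric Newton invariant $\breve{\bar\nu}(g)$. The subtle point is that one must know a priori that the lift $g$ has its slope morphism factoring through $\bT$ with the expected coweight, which is where $\s$-straightness is essential: straightness ensures that the word $\tw\s(\tw)\cdots\s^{n-1}(\tw)$ equals $t^\l$ on the nose (no cancellation with the Weyl part), so that $(g\s)^n$ is a pure translation, and straightness together with the length-additivity of Proposition \ref{length-add} forces the corresponding slope vector to be dominant after conjugation by $\brW_0$. Once these compatibilities are in place, the embedding $f$ is simply the composition of the He14 bijection with $\pi_\s$, and Theorem \ref{gamma} delivers injectivity.
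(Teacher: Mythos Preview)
The paper does not prove this theorem at all: it is quoted as Kottwitz's classification result from \cite{kottwitz-isoI}, \cite{kottwitz-isoII}, and the authors explicitly write ``We skip the definition of the maps $\breve\k$ and $\breve{\bar\nu}$ here.'' Immediately afterwards, the paper \emph{uses} this embedding together with Theorem~\ref{gamma} to deduce the bijection $\brW\sslash_\s\brW\cong B(\bG)$. Your proposal runs the logic in the opposite direction: start from the bijection $B(\bG)\cong\brW\sslash_\s\brW$ (imported from \cite{He14}) and then compose with $\pi_\s$ to get the embedding. That is a legitimate alternative route, and in fact it is close in spirit to how the paper later re-derives this circle of ideas in Theorem~\ref{dec}; but you should be aware that you must source the bijection from a place that does not itself invoke Kottwitz's theorem, or the argument is circular. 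Theorem~\ref{dec} of this paper (proved after the statement in question) does give such an independent argument, so the reversal can be made honest, but it requires reorganizing the logical order.

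There are also two inaccuracies in your sketch. First, the claim that ``straightness ensures that $\tw\s(\tw)\cdots\s^{n-1}(\tw)=t^\l$ on the nose'' confuses two things: the equality $\tw\s(\tw)\cdots\s^{n-1}(\tw)=t^\l$ holds for \emph{any} $\tw\in\brW$ once $n$ is sufficiently divisible (this is how $\nu_{\tw,\s}$ is defined in \S\ref{straight}), and straightness concerns only the length identity $\breve\ell(\tw\s(\tw)\cdots\s^{n-1}(\tw))=n\,\breve\ell(\tw)$. Straightness is what makes the Iwahori double coset $\brI\tw\brI$ a single $\s$-conjugacy class, not what controls the translation part. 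Second, Proposition~\ref{length-add} compares the length functions over $F$ and over $\brF$; it plays no role here since the entire discussion takes place over~$\brF$.
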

For $g \in \brG$, $\breve \k(g)$ and $\breve{\bar \nu}(g)$ are called the Kottwitz point and the Newton point of $g$ respectively. We skip the definition of the maps $\breve \k$ and $\breve{\bar \nu}$ here. But we would like to mention a different point of view, which is useful in our paper.

The natural embedding $\bN_{\bS}(\brF) \to \brG$ induces the map $$\br\pi_\s: \brW \to B(\bG) \to \breve\Omega_\s \times X_*(\bT)_\BQ^+.$$ By \cite{GHKR} and \cite{He14}, the map $\brW \to B(\bG)$ is surjective. In particular, the image of $B(\bG)$ in $\breve\Omega_\s \times X_*(\bT)_\BQ^+$ equals to the image of $\br\pi_\s$. By Theorem \ref{gamma}, we have a natural bijection $$\brW\sslash_{\s} \brW \cong B(\bG).$$

\subsection{Some decompositions on $\brG$}
Let $\th$ be either an automorphism or a Frobenius morphism of $\brG$. We assume furthermore that $\th(\brI)=\brI$. We denote the induced action on $\brW$ still by $\th$.


Let $\br\G \subseteq \br\Omega$ be a $\th$-stable subgroup. Let $\brG(\G)=\brG \times_{\breve \Omega} \br\G$ be the Cartesian product.
Notice that the Iwahori-Weyl group of $\bG_{\br F}$ is $\brW(\br\G)=\brW_a \rtimes \br\G$. For any $\breve \nu \in \brW\sslash_{\th} \brW(\G)$, we define $$\brG(\br\G; \breve \nu)= \brG(\br\G) \cdot_\th \brI w \brI,$$ where $w$ is any $\th$-straight element in $\breve \nu$. Note that if $w'$ be another $\th$-straight in $\breve \nu$, then by Theorem \ref{min} (2) $w \approx_\th \t w' \th(\t) \i$ for some $\t \in \br\G$. Applying the proof of \cite[Lemma 3.1 (2)]{He14}, we deduce that $\brG(\br\G) \cdot_\th \brI w \brI = \brG(\br\G) \cdot_\th \brI w' \brI$. Therefore the definition of $\brG(\br\G; \breve \nu)$ is independent of the choice of $w$. If $\br\G$ is trivial, $\brG(\br\G) = \brG_1$ is the subgroup generated by all the parahoric subgroups. In this case, we write $\brG(\breve \nu)^\flat$ for $\brG(\br\G; \breve \nu)$.

Now we prove the main result of this section.

\begin{theorem} \label{dec} We have that
$$\brG = \sqcup_{\breve \nu \in \breve W \sslash_\th \brW(\br\G)} \brG(\br\G; \breve \nu).$$

Moreover, if $\th$ is a Frobenius morphism, then the $\th$-twisted conjugation action of $\brG(\br\G)$ on $\brG(\br\G; \breve \nu)$ is transitive.
\end{theorem}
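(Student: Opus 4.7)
The strategy is to parallel \cite[Theorem 3.7]{He14} (which treats the case $\br\G = \br\Omega$) by combining three tools: the Bruhat decomposition $\brG = \bigsqcup_{w \in \brW} \brI w \brI$, the reduction to straight elements from Theorem \ref{min}, and the injectivity of $\pi_{\th, \br\G}$ on straight orbits from Theorem \ref{Gamma}. The decomposition claim splits into a \emph{covering} statement (every $\brI w \brI$ is absorbed into some $\brG(\br\G; \breve\nu)$) and a \emph{disjointness} statement (the pair of invariants $(\br\k_{\th, \br\G}, \br{\bar\nu})$ separates distinct $\breve\nu$).

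For the covering, I would apply Theorem \ref{min}(1) to any $w \in \brW$ to obtain a chain $w \to_\th u x$ with $x$ a $\th$-straight element, $u \in \brW_J$, and $x \th(J) x\i = J$. Each elementary step $w_1 \overset{s}\to_\th w_2$ lifts to the group level via $\brI s \brI \subseteq \brG_1$: the standard Bruhat relations together with the orbit-dimension argument in \cite[Lemma 3.1]{He14} give $\brI w_1 \brI \subseteq \brG_1 \cdot_\th \brI w_2 \brI$. Iterating yields $\brI w \brI \subseteq \brG_1 \cdot_\th \brI u x \brI$, and the condition $x \th(J) x\i = J$ allows the finite factor $u \in \brW_J$ to be absorbed by the parahoric attached to $J$ (which sits inside $\brG_1$) via $\th$-twisted conjugation, placing $\brI w \brI$ inside $\brG(\br\G; [x])$. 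For the disjointness, the Kottwitz map $\br\k: \brG \to \br\Omega$ (with kernel $\brG_1$) satisfies $\br\k(h g \th(h)\i) = \br\k(g) + (1-\th)\br\k(h)$, so for $h \in \brG(\br\G)$ it descends to $\br\k_{\th, \br\G}: \brG \to \br\Omega / (1-\th)\br\G$ on twisted conjugation orbits, while the Newton point $\br{\bar\nu}$ is already $\brG$-twisted conjugation invariant. For $\th$-straight $x$, length additivity forces $\brI x \brI \cdot \brI \th(x) \brI \cdots \brI \th^{n-1}(x) \brI = \brI t^\l \brI$ with $\l / n = \nu_{x, \th}$, so every $g \in \brI x \brI$ has Newton point $\nu_{x, \th}$ and Kottwitz image $\k(x)$. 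Hence $(\br\k_{\th, \br\G}, \br{\bar\nu})$ takes the value $\pi_{\th, \br\G}(\breve\nu)$ on $\brG(\br\G; \breve\nu)$, and Theorem \ref{Gamma} yields the required pairwise disjointness.

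For the transitivity clause, assume $\th$ is a Frobenius. By the covering argument, $\brG(\br\G; \breve\nu) = \brG(\br\G) \cdot_\th \brI x \brI$ for any $\th$-straight representative $x$ of $\breve\nu$, so it suffices to show that the $\th$-twisted $\brI$-action on $\brI x \brI$ is transitive and that each elementary reduction step $\brI w_1 \brI \subseteq \brG_1 \cdot_\th \brI w_2 \brI$ is in fact a single $\brG_1$-twisted conjugation orbit. This is where Lang--Steinberg enters: $\brI$ is a pro-algebraic group on which $\th$ acts as a Frobenius, and pro-connectedness of the relevant twisted centralisers (which follows from straightness of $x$ via the Iwahori factorisation) combines with the vanishing of $H^1$ to yield transitivity on each piece. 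The main obstacle, I expect, is upgrading the set-theoretic inclusions from the covering step to single-orbit statements; once this is carried out, transitivity on $\brG(\br\G; \breve\nu)$ follows by chaining along the reduction $w \to_\th u x$.
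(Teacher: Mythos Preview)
Your approach matches the paper's: reduction to straight elements via Theorem~\ref{min} and \cite[Lemma 3.1]{He14}, absorption of the finite factor $u$ through the parahoric for $J$, disjointness via the invariants and Theorem~\ref{Gamma}, and transitivity via the $\brI$-action on $\brI x \brI$ for straight $x$.

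Two clarifications. First, the absorption of $u \in \brW_J$ into $\brG_1 \cdot_\th \brI x \brI$ already uses a Lang--Steinberg type statement on the reductive quotient $\bar\sP = \sP/\sU$ (namely $\bar\sP = \bar\sP \cdot_\iota \bar\sB$ with $\iota = \Ad(\dot x) \circ \th$), so this input enters in the covering step and not only in the transitivity clause. Second, the ``main obstacle'' you flag for transitivity is not needed: since $\brG(\br\G; \breve\nu) = \brG(\br\G) \cdot_\th \brI x \brI$ holds by \emph{definition} for any $\th$-straight $x \in \breve\nu$, transitivity of the $\th$-twisted $\brI$-action on $\brI x \brI$ (supplied by \cite[Proposition 6.3]{GHKR} and \cite[Proposition 3.4.3]{GHN} when $\th$ is a Frobenius) immediately yields transitivity of $\brG(\br\G)$ on the whole stratum---there is no need to upgrade the reduction steps along $w \to_\th u x$ to single-orbit statements.
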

\begin{proof}
The statement follows similarly as \cite[Theorem 3.7]{He14}. We sketch a proof for completeness.

Combining Theorem \ref{min} (2) with the reduction method in \cite[Lemma 3.1]{He14}, we have $$\brG = \cup_{x \in \brW_{\th-\min}} \brG_1 \cdot_\th \brI x \brI.$$ Let $w \in \brW_{\th-\min}$ and let $J, x, u$ be as in Theorem \ref{min} (2) such that $w \to_\th u x$ and hence $w \approx_\th u x$. Thus, $\brG_1 \cdot_\th \brI w \brI = \brG_1 \cdot_\th \brI u x \brI$ by \cite[Lemma 3.1 (2)]{He14}. As in the proof of \cite[Lemma 3.2]{He14}, we let $\sP \subseteq \brG$ be the parahoric subgroup corresponding to $J$ and let $\sU$ be its pro-unipotent radical. Denote by $\bar \sP = \sP / \sU$ the reductive group and $p \mapsto \bar p$ the projection from $\sP$ to $\bar \sP$. As $x \in {}^J W {}^{\th(J)}$ and $x \th(J) x\i = J$, the map $\bar p \mapsto \dot x \th(\bar p) \dot x\i$ gives an automorphism or a Frobenius morphism $\iota$ of $\bar \sP$, which preserves the Borel subgroup $\bar \sB = (\brI \cap \sP) / \sU$ of $\bar \sP$. Applying Lang-Steinberg theorem, we have $\bar \sP = \bar \sP \cdot_\iota \bar \sB$, which means $$\sP \cdot_\th \brI u x \brI \subseteq \sP \cdot_\th (\sP x) \subseteq  \sP \cdot_\th \brI x \brI.$$ So $\brG_1 \cdot_\th \brI w \brI = \brG_1 \cdot_\th \brI u x \brI = \brG_1 \cdot_\th \brI x \brI \subseteq \brG(\br\G) \cdot_\th \brI x \brI$. Hence $$\brG = \cup_{\breve \nu \in \brW \sslash_\th \brW(\br\G)} \brG(\br\G; \breve \nu).$$

Let $\breve \nu, \breve \nu' \in \brW \sslash_\th \brW(\br\G)$. Suppose that $\brG(\br\G; \breve \nu) =\brG(\br\G; \breve \nu')$. By the proof of \cite[Proposition 3.6]{He14}, $\br\pi_{\th, \br\G} (\breve \nu) = \br\pi_{\th, \br\G} (\breve \nu')$, where $\br\pi_{\th, \br\G}$ is the map on $\brW \sslash_{\th}\brW(\br\G)$ defined in \S\ref{inv}. By Theorem \ref{Gamma}, we have $\breve \nu =\breve \nu'$ and hence $$\brG=\sqcup_{\breve \nu \in \breve W \sslash_\th \brW(\br\G)} \brG(\br\G; \breve \nu).$$

For the ``moreover part, by the above arguments it suffices to show that the $\th$-twisted conjugation action of $\br I \subseteq \brG(\br\G)$ on $\brI x \brI$ is transitive if $x$ is $\th$-straight. The statement then follows from \cite[Proposition 6.3]{GHKR} and \cite[Proposition 3.4.3]{GHN}.
\end{proof}

\begin{corollary}
	Let $\breve \nu \in \brW\sslash_{\th} \brW(\br\G)$. Then we have
	\[\brG(\br\G; \breve \nu)=\sqcup_{\breve \nu' \in \brW\sslash_{\th} \brW_a; \breve \nu' \subseteq \breve \nu} \brG(\breve \nu')^\flat.\]
\end{corollary}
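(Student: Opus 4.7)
My plan is to deduce this directly from two instances of Theorem \ref{dec}. First, I would apply that theorem with $\br\G$ replaced by $\{1\}$, giving the finer partition
\[
\brG = \bigsqcup_{\breve \nu' \in \brW \sslash_\th \brW_a} \brG(\breve \nu')^\flat,
\]
and apply it with the given $\br\G$, giving the coarser partition into pieces $\brG(\br\G; \breve \mu)$. Intersecting the first partition with the single piece $\brG(\br\G; \breve \nu)$ automatically produces a disjoint union, so my remaining task is to identify exactly which $\brG(\breve \nu')^\flat$ contribute nonempty intersection — and I expect this to be exactly those $\breve \nu'$ with $\breve \nu' \subseteq \breve \nu$.

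For the inclusion direction, I would take a $\th$-straight representative $x$ of a given $\breve \nu' \subseteq \breve \nu$ and note that the condition ``$\th$-straight'' is a property of $x \in \brW$ alone and does not depend on which subgroup acts by twisted conjugation; hence $x$ is also a straight representative of $\breve \nu$ viewed as a $\brW(\br\G)$-orbit. Since $\brG_1 \subseteq \brG(\br\G)$, this would give
\[
\brG(\breve \nu')^\flat = \brG_1 \cdot_\th \brI x \brI \;\subseteq\; \brG(\br\G) \cdot_\th \brI x \brI = \brG(\br\G; \breve \nu).
\]

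For the converse, if $\breve \nu' \not\subseteq \breve \nu$, I would let $\breve \mu \in \brW \sslash_\th \brW(\br\G)$ be the unique straight $\brW(\br\G)$-orbit containing $\breve \nu'$ (well defined by the same observation on straightness) and apply the inclusion step to the pair $(\breve \nu', \breve \mu)$ to conclude $\brG(\breve \nu')^\flat \subseteq \brG(\br\G; \breve \mu)$. Since $\breve \mu \ne \breve \nu$, the disjointness already built into Theorem \ref{dec} then forces $\brG(\breve \nu')^\flat \cap \brG(\br\G; \breve \nu) = \emptyset$. I do not anticipate any real obstacle: the whole argument is a formal consequence of the two instances of Theorem \ref{dec} together with the observation that straightness is intrinsic to the element of $\brW$, which is also what makes the coarsening map $\brW \sslash_\th \brW_a \to \brW \sslash_\th \brW(\br\G)$ well defined in the first place.
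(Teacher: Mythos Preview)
Your argument is correct. The inclusion $\brG(\breve\nu')^\flat \subseteq \brG(\br\G;\breve\nu)$ for $\breve\nu' \subseteq \breve\nu$ is exactly what the paper records ``by definition,'' and your observation that straightness is intrinsic to the element makes this precise. Your treatment of the reverse inclusion, however, differs from the paper's. You argue abstractly: invoke Theorem~\ref{dec} for both $\{1\}$ and $\br\G$ to obtain two partitions of $\brG$, use the inclusion step to see that the fine partition refines the coarse one, and then read off which fine pieces assemble $\brG(\br\G;\breve\nu)$ from the disjointness of the coarse pieces. The paper instead computes directly: writing $\brG(\br\G) = \sqcup_{\o \in \br\G} \brG_1 \o$ and using that each $\o \in \br\G$ normalizes $\brI$, one obtains
\[
\brG(\br\G;\breve\nu) = \brG(\br\G) \cdot_\th \brI x \brI = \bigcup_{\o \in \br\G} \brG_1 \cdot_\th \brI \o^{-1} x \th(\o) \brI,
\]
and since each $\o^{-1} x \th(\o)$ is again $\th$-straight and lies in $\breve\nu$, each term on the right is some $\brG(\breve\nu')^\flat$ with $\breve\nu' \subseteq \breve\nu$. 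Your route is cleaner and avoids any computation, at the cost of invoking the disjointness in Theorem~\ref{dec} for the larger group $\brW(\br\G)$; the paper's route is more explicit and only needs the $\brW_a$-case of the theorem together with the coset decomposition of $\brG(\br\G)$ over $\brG_1$.
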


\begin{proof}
	By definition, for any $\breve \nu' \in \brW\sslash_{\th} \brW_a$ with $\breve \nu' \subseteq \breve \nu$, we have $\brG(\breve \nu')^\flat \subset \brG(\breve \nu)_\G$.
	
	On the other hand, as $\brG(\br\G) = \sqcup_{\o \in \br\G} \brG_1 \o$,
	we have $$\brG(\br\G; \breve \nu) = \brG(\br\G) \cdot_\th I x I = \cup_{\o \in \br\G} \brG_1 \cdot_\th \brI \o\i x \th(\o) \brI \subseteq \sqcup_{\breve \nu' \in \brW\sslash_{\th} \brW_a; \breve \nu' \subseteq \breve \nu} \brG(\breve \nu')^\flat,$$ where $x$ is any $\th$-straight representative of $\breve \nu$.
\end{proof}

\subsection{The case $\th=\s$}\label{th-s} In this subsection, we consider the case that $\th = \s$ is the Frobenius morphism. If $\br\G = \br\Omega$, then Theorem \ref{dec} gives a natural bijection $B(\bG) \cong \brW \sslash_{\s} \brW$.

Now we denote by $B(\bG)^\flat$ the set of $\brG_1$-orbits on $\brG$ for the $\s$-twisted conjugation action. The set $B(\bG)^\flat$ occurs in the study of connected components of Shimura varieties \cite{Ha}.

Theorem \ref{dec} gives a natural bijection $B(\bG)^\flat \cong \brW \sslash_{\s} \brW_a$ and we have the following commutative diagram
\[
\xymatrix{
B(\bG)^\flat \ar[r]^-\cong \ar@{->>}[d] & \brW \sslash_{\s} \brW_a \ar@{->>}[d] \\
B(\bG) \ar[r]^-\cong & \brW \sslash_{\s} \brW.
}
\]

If $\bG$ is residually split, then the action of $\s$ on $\br\Omega$ is trivial. By Corollary \ref{bij}, the natural projection map the natural projection $W \sslash_\s W_a \to W \sslash_\s W$ is bijective. Hence the natural projection map $B(\bG)^\flat \to B(\bG)$ is a bijection.

\begin{example}\label{ex-th-s}
	If $\bG$ is not residually split, then  in general, $\brW \sslash_\th \brW \neq \brW \sslash_\th \brW_a$ and $B(\bG) \neq B(\bG)^\flat$. For example, let $\bG=\Res_{E/F} \SL_2$, where $E / F$ is an unramified extension of degree two. In this case, $\brW = \brW' \times \brW'$ and $\s$ permutes the two copies of the Iwahori-Weyl group $\brW'$ of $\SL_2$. Let $\o \in \brW'$ be the unique length zero element of order 2. Then the two length zero elements $(1, 1), (\o, \o) \in \brW$ are in the same $\s$-conjugacy class, but not in the same $\brW_a$-$\s$-conjugacy class.
\end{example}

\subsection{The case that $\theta$ is a group automorphism}
In \cite[\S 2]{KV}, Kottwitz and Viehmann defined the Newton homomorphism $\nu_\th(g): \bold D \to \bold G$ and the Newton point $\bar \nu_\th(g) \in Y_\BQ^+$ for each $g \in \brG$, where $\bold D$ is the diagonalizable group scheme over $F$ with character group $\BQ$.

We show that their definition of Newton points coincides with ours. Indeed, as $\nu_\th(h\i g \th(h)) = \Int(h\i) \circ \nu_\th(g)$ and that $\brG = \sqcup_{\br\nu \in \brW \sslash_\th \brW} \brG(\nu)$, it suffices to show that $\nu_\th(g) = \nu_{x, \th}$ for $g \in \br I x \brI$ if $x$ is $\th$-straight. By the proof of \cite[Theorem 2.1.2]{GHKR}, $\br I \cdot_\th \br I_{\bold L} x \br I_{\bold L} = \br I x \br I$, where $\br I_{\bold L} =\bold L(\br F) \cap \br I$ and $\bold L$ is the centralizer of $\nu_{\th, x}$ in $\bold G$. Thus, by replacing $\bold G$ with $\bold L$, we can assume further that $x \in \br \Omega$. Then by the definition in \cite[\S 2]{KV}, we conclude that $\nu_\th(g) = \nu_\th(x) = \nu_{x, \th}$ as desired.

\section{A comparison between $W \sslash W$ and $\brW \sslash_\th \brW$} \label{embed}

\begin{theorem} \label{inject} Suppose that the induced actions of $\th$ and $\s$ on $\brW$ commute. Then the natural embedding $W \hookrightarrow \brW$ induces an injection $$i: W \sslash_\th W(\G)  \hookrightarrow \brW \sslash_\th \brW(\G).$$
\end{theorem}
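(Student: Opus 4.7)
The plan is to reduce Theorem \ref{inject} to Theorem \ref{Gamma} applied in parallel to $W$ and to $\brW$. Well-definedness of $i$ is immediate: iterating Proposition \ref{length-add} shows that every $\th$-straight element of $W$ remains $\th$-straight in $\brW$, and the inclusion $W(\G) \subseteq \brW(\G)$ obviously preserves twisted conjugacy, so straight classes map to straight classes.

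For injectivity, given $\nu, \nu' \in W \sslash_\th W(\G)$ with $i(\nu) = i(\nu')$, I pick $\th$-straight representatives $x \in \nu$ and $x' \in \nu'$. These lie in a common $\th$-twisted $\brW(\G)$-conjugacy class, so Theorem \ref{Gamma} applied to $\brW$ yields $\breve\pi_{\th, \G}(x) = \breve\pi_{\th, \G}(x')$ in $\br\Omega_{\th, \G} \times X_*(\bT)_\BQ^+$. By Theorem \ref{Gamma} applied to $W$, to conclude $\nu = \nu'$ it then suffices to lift this to an equality $\pi_{\th, \G}(x) = \pi_{\th, \G}(x')$ in $\Omega_{\th, \G} \times Y_\BQ^+$, i.e., to establish compatibility of the Kottwitz and Newton maps under $W \hookrightarrow \brW$.

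The Kottwitz component is formal: the inclusion $\Omega = \br\Omega^\s \hookrightarrow \br\Omega$ is injective and $(1 - \th)\G \subseteq \Omega$ (since $\G \subseteq \Omega$ is $\th$-stable), so an equality in $\br\Omega_{\th, \G}$ between images of elements of $\Omega$ descends to the corresponding equality in $\Omega_{\th, \G}$. For the Newton component, since $\s \th = \th \s$ and $\s(x) = x$, the translation $t^\l = x \th(x) \cdots \th^{n-1}(x)$ is $\s$-fixed, and hence $\nu_{x, \th} = \l/n$ already lies in the $\s$-fixed subspace $Y_\BQ \subseteq X_*(\bT)_\BQ$. By uniqueness its $\brW_0$-dominant representative $\breve{\bar\nu}_{x, \th}$ is also $\s$-invariant, and the standard identification of the $\s$-fixed part of the absolute $\brW_0$-dominant chamber with the relative $W_0$-dominant chamber in $Y_\BQ$ lets me conclude $\breve{\bar\nu}_{x, \th} = \bar\nu_{x, \th}$, and likewise for $x'$. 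Combining these two compatibilities gives $\pi_{\th, \G}(x) = \pi_{\th, \G}(x')$, finishing the proof. The most delicate step is this dominant-chamber identification, which is standard in Bruhat–Tits theory for quasi-split reductive groups but should be invoked carefully from a suitable reference.
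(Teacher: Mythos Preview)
Your reduction to Theorem \ref{Gamma} and the handling of the Kottwitz component are fine, and indeed match what the paper does. The gap is in the Newton component. Your argument that $\breve{\bar\nu}_{x,\th}$ is $\s$-invariant ``by uniqueness'' requires that the linear action of $\s$ on $\br Y_\BQ$ preserve both the finite Weyl group $\brW_0$ and the dominant cone $\br Y_\BQ^+$. Both of these depend on a choice of special vertex $\breve e$ of $\breve\fka_C$ in $\breve\sA$, and in the non-quasi-split case there is no $\s$-fixed special vertex available: $\s$ permutes the vertices of $\breve\fka_C$ nontrivially, so $\s(\brW_0)\neq \brW_0$ and $\s(\br Y_\BQ^+)\neq \br Y_\BQ^+$. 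Consequently the ``standard identification'' you invoke at the end simply does not exist outside the quasi-split case; this is exactly the phenomenon the paper isolates in \S4.2 and illustrates in Examples \ref{ex1} and \ref{ex} (see Figure \ref{fig-1}, where the $W_0$-dominant ray and the $\brW_0$-dominant ray through the origin are visibly different). Your own caveat that this step is ``standard \dots\ for quasi-split reductive groups'' is precisely the problem: the theorem makes no quasi-split hypothesis.

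The paper's proof avoids comparing $\bar\nu_{x,\th}$ with $\breve{\bar\nu}_{x,\th}$ altogether. Instead it fixes the special vertex $e$ of $\fka_C$ in $\sA$, arranges by a $W_0$-conjugation that $e+\nu_{x,\th}$ and $e+\nu_{y,\th}$ lie in the same Weyl chamber of $\sA$ based at $e$, and then argues directly that $\nu_{x,\th}=\nu_{y,\th}$: since they are $\brW_0$-conjugate, any absolute root hyperplane of $\brW_0$ separating them would restrict to a relative root hyperplane of $W_0$ through $e$ separating $e+\nu_{x,\th}$ from $e+\nu_{y,\th}$, contradicting the chamber condition. This bypasses the incompatibility of dominant cones. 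To fix your argument you would need to replace the last paragraph by something of this shape.
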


\subsection{Some discussions}
	Before we give the proof, we would like to mention the cases where such injection can been seen easily and point out the subtlety in the general case.
	
	We have $\brW=\br Y \rtimes \brW_0$ and $W=Y \rtimes W_0$. If $\bG$ is residually split, then $\s$ acts trivially on $\brW$ and hence we have $\brW=W$. In this case, $W\sslash_{\th} W=\brW \sslash_{\th} \brW$.
	
	If $\bG$ is quasi-split, then we have $\Omega=\br \Omega^\s$, $W_0=\brW_0^\s$, and $Y=\br Y^\s$. Then we have a natural embedding $\Omega_\th \to \br \Omega_{\th, \Omega}$ and $Y_{\BQ}^+ \subset \br Y_{\BQ}^+$. By Theorem \ref{Gamma}, we have a natural embedding $W \sslash_\th W  \hookrightarrow \brW \sslash_\th \brW(\Omega)$.
	
	The cases above are easy to handle as the Newton maps over $F$ and over $\brF$ are compatible. In general, one may view $Y_\BQ$ as an affine subspace (but not necessarily a linear subspace) of $\br Y_\BQ$, and a $W_0$-dominant point in $Y_\BQ$ may not be a $\br W_0$-dominant point in $\br Y_\BQ$. This causes difficulty to compare the Newton maps over $F$ and over $\brF$.
	
\begin{example} \label{ex1} Here we consider the case where $\brW$ is of type affine $A_2$ with $\br\BS=\{s_0, s_1, s_2\}$ and the induced action of $\s$ on $\br\BS$ permutes $s_0$ and $s_1$, and fixes $s_2$. Let $\{\a_1, \a_2\}$ be the set of simple roots for $\br W_0$. Then
	\begin{gather*} \br Y_\BQ = \BQ \a^\vee \oplus \BQ \b^\vee, \\ Y_{\BQ} = \br Y_{\BQ}^\s = \frac{1}{3}\a_2^\vee + \BQ \a_1^\vee.
	\end{gather*} So $Y_{\BQ}$ is an affine subspace of $\br Y_{\BQ}$ not containing the origin $0$. The dominant chambers of $\br Y_\BR$ and $Y_\BR$ are illustrated in Figure \ref{fig-1}.
\end{example}

\begin{example} \label{ex} Here we consider the case where $\brW$ is of type affine $B_2$ and the induced action of $\s$ on $\brW$ is nontrivial. In this case, we fix a set $\{\a, \b\}$ of simple roots for $\br W_0$, where $\a$ is the long root and $\b$ is the short root. Then
	\begin{gather*} \br Y_\BQ = \BQ \a^\vee \oplus \BQ \b^\vee, \\ Y_{\BQ} = \br Y_{\BQ}^\s = \frac{1}{4}(\a^\vee + \b^\vee) + \BQ \a^\vee.
	\end{gather*} So $Y_{\BQ}$ is an affine subspace of $\br Y_{\BQ}$ not containing the origin $0$. The dominant chambers of $\br Y_\BR$ and $Y_\BR$ are illustrated in Figure \ref{fig-1}.
	
	\begin{figure}
		\begin{tikzpicture}[scale=.8]
		
		\draw[fill, gray] (0, 0) -- (2, 3.464) -- (4, 0) -- cycle;
		
		\draw[black,line width=.7mm](1,1.732) -- (5, -0.5774);
		\draw[dotted] (1,1.732) -- (-3, 4.0414);
		\draw[dash dot] (0,0) -- (6, -3.464);
		\draw[dotted,black,line width=.7mm] (0,0) -- (4, 2.3094);
		\draw[fill, gray] (1,1.732) circle [radius=.17cm];

		\draw (-6,3.464) -- (6,3.464);
		\draw (-6,0) -- (6,0);
		\draw (-6,-3.464) -- (6,-3.464);
		
		\draw (-1,5.196) -- (-5,-1.732);
		\draw (-3,5.196) -- (3,-5.196);
		\draw (-5,1.732) -- (-1,-5.196);
		\draw (1,5.196) -- (5,-1.732);
		\draw (3,5.196) -- (-3,-5.196);
		\draw (5,1.732) -- (1,-5.196);
		
		
		\draw[fill, black] (0,0) circle [radius=.17cm];
		\end{tikzpicture}
		
		\begin{tikzpicture}[scale=.8]
		
		\draw[fill=gray] (0, 4) -- (2, 2) -- (0, 2) -- cycle;
		\draw[fill=gray] (0, 0) -- (0,2) -- (2, 2) -- cycle;
		
		\draw[black,line width=.7mm](0, 2) -- (5, 2);
		
		\draw[dotted] (-5, 2) -- (0, 2);
		
		\draw (-5,0) -- (0,0);
		
		\draw (0,-5) -- (0,0);
		\draw[dash dot] (0, 0) -- (5, 0);
		
		
		\draw (-4.7,-4.7) -- (2, 2);
		\draw [dotted,black,line width=.7mm](0, 0) -- (0, 5);
		\draw (2,2) -- (4.7, 4.7);
		\draw (-4.7, 4.7) -- (-2,2);
		\draw (4.7, -4.7)-- (-2,2);
		
		\draw (-2, 2) -- (0.7, 4.7);
		\draw (-4.7, -0.7) -- (-2, 2);
		\draw (-0.7, -4.7) -- (4.7, 0.7);
		\draw (-0.7, 4.7) -- (4.7, -0.7);
		\draw (-4.7, 0.7) -- (0.7, -4.7);
		\draw[fill, black] (0,0) circle [radius=.17cm];
		\draw[fill, gray] (0,2) circle [radius=.17cm];
		\end{tikzpicture}
		
		\caption{This is an illustration for Example \ref{ex1} and Example \ref{ex} respectively. The thick dot is the origin of the apartment $\br Y_\BR$. The alcove in grey is the based alcove. The Frobenius $\s$ acts on $\br Y_\BR$ by the reflection along the affine line containing the thick segment. Hence it is the apartment $Y_\BR = \br Y_\BR^\s$. The grey dot is a special vertex $e$ in $\sA$. The right half of the affine line through the grey dot is the dominant chamber of $Y_\BR$. The corresponding half line (by  minus $e$) is the dashed-dotted half line through the origin of the apartment $\br Y_\BR$. The corresponding dominant elements in $\br Y_\BR$ is the dotted half line through the origin. }\label{fig-1}
	\end{figure}
\end{example}

\subsection{Proof of Theorem \ref{inject}}
Let $x$ be a $\th$-straight element in $W$. Then for any $n \in \BZ_{\ge 0}$, we have $\ell(x \th(x) \cdots \th^{n-1}(x))=n \ell(x)$. By Proposition \ref{length-add}, we have that $\breve \ell(x \th(x) \cdots \th^{n-1}(x))=n \breve \ell(x)$. Thus $x$ is also a $\th$-straight element in $\brW$. Note that $W(\G) \subset \brW(\G)$. Thus the natural embedding map $W \to \brW$ induces a map $i: W \sslash_{\th} W(\G) \to \brW \sslash_{\th} \brW(\G)$.

Next we show that the map is injective. Let $x, y \in W$ be two $\th$-straight elements. In view of Proposition \ref{Gamma}, we need to show that $\pi_{\th, \G}(x) = \pi_{\th, \G}(y)$ if $\br \pi_{\th, \G}(x) = \br \pi_{\th, \G}(y)$, where $\pi_{\th, \G}, \br \pi_{\th, \G}$ are as in \S\ref{inv} defined for $W$ and $\brW$ respectively.

By the definition of $\breve \pi_{\th, \G}$, we have $\k_{\th, \G}(x) = \k_{\th, \G}(y) \in \Omega_{\th, \G}$. Next we compare the Newton points.

We fix a special vertex $e$ of the alcove $\fka_C$ in $\sA$ and identify $\sA$ with an affine subspace in $\breve \sA$ that contains $e$. Let $w \in W$. Let $n$ be sufficiently divisible so that $\th^n=1$ and
\begin{gather*}
w \th(w) \cdots \th^{n-1}(w)=t^{\l} \in W; \\
w \th(w) \cdots \th^{n-1}(w)=t^{\breve \l} \in \brW.
\end{gather*}

We regard $w \th(w) \cdots \th^{n-1}(w)$ as affine transformations on $\sA$ and $\breve \sA$. Then $w \th(w) \cdots \th^{n-1}(w)$ sends $e$ to $e+\l$ and also to $e+\breve \l$. Therefore $\l=\breve \l$. Note that in $\sA$, the special vertex $e$ is considered as the origin. Hence $\nu_{w, \th}$ (when $w$ is regarded as an element in $W$) defined in \S\ref{inv} equals to $e+\l/n$ when considered as an element in $\br Y_\BQ$. On the other hand, $\breve \nu_{w, \th}$ (when $w$ is regarded as an element in $\brW$) equals to $\br \l/n=\l/n \in \br Y_\BQ$.

Up to the action of a suitable element in $W_0$, we may assume that $e+\nu_{x, \th}$ and $e+\nu_{y, \th}$ are in the same Weyl chamber of $\sA$ (with origin $e$). We claim that $\nu_{x, \th} = \nu_{y, \th}$ and this will finish the proof. As $\nu_{x, \th}, \nu_{y, \th}$ are conjugate by $\brW_0$, it suffices to show there is no root hyperplane of $\brW_0$ separating $\nu_{x, \th}$ from $\nu_{y, \th}$. Otherwise, let $\g$ be such a root, and denote by $\bar\g$ the corresponding relative root for $W_0$. Then the root hyperplane $H_{\bar\g} \subseteq \sA$ of $\bar \g$ passing through $e$ separates $e + \nu_{x, \th}$ from $e+\nu_{y, \th}$, contradicting that $e+\nu_{x, \th}, e+\nu_{y, \th}$ are in the same Weyl chamber of $\sA$. The proof is finished.


\section{A geometric interpretation of Newton strata}
\subsection{Newton strata of $G$}\label{5.1}
Suppose that the action of $\th$ and $\s$ on $\brG$ commutes. Recall that $W_{\th-\min}$ is the set of elements in $W$ that are of minimal length in their $\th$-twisted conjugacy classes of $W$. 

In \cite{Newton}, the first author introduced the Newton strata of $G$. We recall the definition here.
For $\nu \in W \sslash_\th W$, the Newton stratum $G(\nu)$ is defined by $$G(\nu) = \cup_{w \in W_{\th-\min}; \pi_\th(w)=\pi_\th(\nu)} G \cdot_\th I w I.$$

\begin{theorem} \cite[Theorem A]{Newton}\label{newtonG}
	We have the following Newton decomposition $$G = \sqcup_{\nu \in W \sslash_\th W} G(\nu).$$
\end{theorem}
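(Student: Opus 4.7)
The plan is to establish the two assertions of the Newton decomposition---that the strata $G(\nu)$ cover $G$ and are pairwise disjoint---by combining a partial-conjugation reduction over $F$ with the results of Section 3 on the $\brF$-side.

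\emph{Covering.} Any $g \in G$ lies in $IwI$ for some $w \in W$ by the Bruhat--Tits decomposition. Applying Theorem \ref{min}(1) to the extended affine Weyl group $W$, I obtain a subset $J \subseteq \BS$ with $W_J$ finite, a $\th$-straight element $x \in {}^J W^{\th(J)}$ with $x\th(J)x^{-1} = J$, and $u \in W_J$ such that $ux \in W_{\th-\min}$ and $w \to_\th ux$. The standard partial-conjugation reduction---iterating $w' \overset{s}{\to}_\th s w' \th(s)$ at the level of Iwahori double cosets and invoking Lang's theorem for the reductive quotient of the parahoric of type $J$ (a connected reductive group over the finite residue field of $F$)---then yields $G \cdot_\th I w I \subseteq G \cdot_\th I(ux) I$. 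As in the proof of Theorem \ref{partial}, the conditions $u \in W_J$ and $x\th(J)x^{-1} = J$ give $\pi_\th(ux) = \pi_\th(x)$, so $g \in G(\nu)$ for the straight $\th$-conjugacy class $\nu$ of $x$.

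\emph{Disjointness.} Suppose $g \in G(\nu) \cap G(\nu')$. Then $g \in G \cdot_\th I w I \cap G \cdot_\th I w' I$ for some $w, w' \in W_{\th-\min}$ with $\pi_\th(w) = \pi_\th(\nu)$ and $\pi_\th(w') = \pi_\th(\nu')$. The reduction above produces $\th$-straight $x, x' \in W$ with $g \in G \cdot_\th I x I \cap G \cdot_\th I x' I$; by Theorem \ref{gamma} applied to $W$, the straight classes of $x$ and $x'$ are precisely $\nu$ and $\nu'$. Since $G = \brG^\s$ has image in $\br\Omega^\s = \Omega$, we have $G \subseteq \brG(\Omega) := \brG \times_{\br\Omega} \Omega$, and therefore $g \in \brG(\Omega) \cdot_\th \brI x \brI \cap \brG(\Omega) \cdot_\th \brI x' \brI$. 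Applying Theorem \ref{dec} with $\br\G = \Omega$ (so that $\brW(\Omega) = \brW_a \rtimes \Omega$), the disjointness of the pieces forces $x$ and $x'$ to represent the same straight $\th$-twisted $\brW(\Omega)$-orbit. Finally, Theorem \ref{inject} applied with $\G = \Omega$ provides an injection $i: W \sslash_\th W \hookrightarrow \brW \sslash_\th \brW(\Omega)$, whence $\nu = \nu'$.

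\emph{Main obstacle.} The principal technical difficulty is the partial-conjugation reduction in the covering step, carried out inside $G$ rather than $\brG$: the $\brF$-side argument uses Lang--Steinberg over an algebraically closed residue field, while here one works over the finite residue field of $F$ and must track $G$-orbits on Iwahori double cosets. Lang's theorem remains valid in the finite setting, but verifying that the $G$-twisted conjugation orbit structure admits the same reduction as its $\brG$-counterpart, step by step across a sequence of elementary moves $w' \overset{s}{\to}_\th sw'\th(s)$, is the delicate point.
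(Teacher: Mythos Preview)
First, note that the paper does not prove Theorem \ref{newtonG}: it is quoted from \cite{Newton} and used as an input to Theorem \ref{main-tt}. So there is no ``paper's own proof'' to compare against, and what you are really doing is proposing an independent proof that recycles Theorems \ref{dec} and \ref{inject}. That is a legitimate and attractive idea --- indeed, your disjointness argument is essentially part (a) of the proof of Theorem \ref{main-tt} read backwards --- but the write-up has two issues.

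\emph{The covering step is mis-stated.} You assert $G \cdot_\th IwI \subseteq G \cdot_\th I(ux)I$ for the specific $ux$ produced by Theorem \ref{min}(1). This does not follow from the reduction $w \to_\th ux$: when a step strictly drops the length, the double-coset reduction yields $G\cdot_\th Iw'I \subseteq G\cdot_\th I(sw'\th(s))I \cup G\cdot_\th I(sw')I$, and the extra piece $I(sw')I$ need not be absorbed. What is true (and is all you need) is the weaker statement $G=\cup_{w\in W_{\th\text{-}\min}} G\cdot_\th IwI$, proved by induction on $\ell(w)$ using exactly that two-term containment. This requires no Lang--Steinberg input whatsoever: once $g\in G\cdot_\th Iw'I$ for some $w'\in W_{\th\text{-}\min}$, the definition of $G(\nu)$ already places $g$ in the stratum with $\pi_\th(\nu)=\pi_\th(w')$.

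\emph{The ``main obstacle'' is a red herring.} You flag Lang's theorem over the finite residue field of $F$ as the delicate point. But (i) for covering it is not needed, as above; and (ii) for disjointness you should not try to prove $g\in G\cdot_\th IxI$ over $F$ at all. Instead, for $w\in W_{\th\text{-}\min}$ with $\pi_\th(w)=\pi_\th(\nu)$, Theorem \ref{min}(1) gives $w\approx_\th ux$, and then over $\brF$ one has $\brI w\brI \subseteq \brG_1\cdot_\th \brI(ux)\brI = \brG_1\cdot_\th \brI x\brI$ by \cite[Lemmas 3.1, 3.2]{He14}, where the second equality uses Lang--Steinberg for the reductive quotient over the \emph{algebraically closed} residue field --- exactly as in the proof of Theorem \ref{dec}. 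Hence $G\cdot_\th IwI \subseteq \brG(\Omega)\cdot_\th \brI x\brI = \brG(i(\nu))^\natural$ (this is precisely step (a) in the proof of Theorem \ref{main-tt}), and your appeal to Theorems \ref{dec} and \ref{inject} then gives disjointness. Working over $F$ and invoking Lang for a possibly non-Frobenius $\th$ on a finite group of Lie type is both unnecessary and, as you suspect, genuinely problematic.

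With these two fixes, your argument does yield a correct proof of the Newton decomposition that is independent of \cite{Newton} and relies only on the results proved in this paper.
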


The above definition of Newton strata plays an essential role in the new proof of Howe's conjecture
given in \cite[\S 5]{Newton}. However, the definition is rather technical. Now we give a geometric interpretation of the Newton strata of $G$, using the decomposition of $\brG$ we discussed in \S \ref{decomp}.

\subsection{Geometric interpretation}\label{5.2} Set $\brW^\natural = \brW(\Omega)$ and $\brG^\natural = \brG(\Omega)$. Let $\br \pi_\th^\natural = \pi_{\th, \Omega}$ be as in \S\ref{inv} defined for $\brW$, viewing $\Omega$ as a subgroup of $\br\Omega$. For $\breve \nu \in \brW \sslash_\th \brW^\natural$, set $\brG(\breve \nu)^\natural = \brG(\Omega; \breve \nu)$.

\begin{theorem}\label{main-tt}
Let $\nu \in W \sslash_\th W$ and $i(\nu)$ be the corresponding element in $\brW \sslash_{\th} \brW^\natural$. Then we have that $$G(\nu)=G \cap \breve G(i(\nu))^\natural.$$
\end{theorem}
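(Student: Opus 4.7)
The plan is to prove the equality by two inclusions: a direct inclusion $G(\nu)\subseteq G\cap \brG(i(\nu))^\natural$ obtained by reducing each minimal-length element to a $\th$-straight representative and lifting to $\brG$, and then the reverse inclusion by a disjointness argument. Three compatibilities will be used throughout: the $F$-rational Iwahori satisfies $I\subseteq \brI$; we have $G\subseteq \brG^\natural$ (since the Kottwitz map $\brG\to\br\Omega$ is $\s$-equivariant and $\br\Omega^\s=\Omega$ by Proposition \ref{length-add}); and a $\th$-straight element $x\in W$ is automatically $\th$-straight in $\brW$, representing $i(\nu)$ by construction (Proposition \ref{length-add} together with the proof of Theorem \ref{inject}).

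For the forward inclusion it suffices to show $G\cdot_\th IwI\subseteq \brG(i(\nu))^\natural$ for every $w\in W_{\th-\min}$ with $\pi_\th(w)=\pi_\th(\nu)$. Applying Theorem \ref{min}(1) in $W$ yields a subset $J\subseteq \BS$ with $W_J$ finite, a $\th$-straight element $x\in {}^J W^{\th(J)}$ with $x\th(J)x^{-1}=J$, and an element $u\in W_J$ such that $w\to_\th ux$; since $w$ has minimal length in its class we in fact have $w\approx_\th ux$. By Theorem \ref{gamma}, $\pi_\th(x)=\pi_\th(w)=\pi_\th(\nu)$, so $x$ is a $\th$-straight representative of $\nu$ in $W$, hence also of $i(\nu)$ in $\brW$. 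Running the reduction of Theorem \ref{dec} in $\brG$---using \cite[Lemma 3.1(2)]{He14} to promote $\approx_\th$ to an equality of $\brG_1$-orbits, and Lang--Steinberg inside the parahoric $\sP$ attached to $J$---gives
\[
\brI w\brI \;\subseteq\; \brG_1\cdot_\th \brI ux\brI \;\subseteq\; \sP\cdot_\th \brI x\brI \;\subseteq\; \brG_1\cdot_\th \brI x\brI.
\]
Combined with $G\subseteq \brG^\natural$ and $I\subseteq \brI$ this yields $G\cdot_\th IwI\subseteq \brG^\natural\cdot_\th \brI x\brI=\brG(i(\nu))^\natural$, as required.

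For the reverse inclusion, pick any $g\in G\cap \brG(i(\nu))^\natural$. The Newton decomposition of Theorem \ref{newtonG} places $g$ in a unique stratum $G(\nu')$ with $\nu'\in W\sslash_\th W$, and the forward inclusion applied to $\nu'$ gives $g\in \brG(i(\nu'))^\natural$. Since the decomposition $\brG=\sqcup_{\br\nu}\brG(\br\nu)^\natural$ of Theorem \ref{dec} is disjoint, we conclude $i(\nu)=i(\nu')$, and the injectivity of $i$ from Theorem \ref{inject} forces $\nu=\nu'$; hence $g\in G(\nu)$.

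The only subtle point lies in the forward direction: one must be certain that a $\th$-straight element $x\in W$ representing $\nu$ genuinely represents $i(\nu)$ once viewed inside $\brW$, despite the fact that the Newton maps over $F$ and over $\brF$ land in distinct affine subspaces of $\br Y_\BQ$ with different dominant chambers (as illustrated in Examples \ref{ex1} and \ref{ex}). This compatibility is exactly the content of Theorem \ref{inject}, so once it is in hand the rest of the argument is a formal assembly of Theorems \ref{dec}, \ref{newtonG}, and \ref{inject}.
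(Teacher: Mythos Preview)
Your proof is correct and follows essentially the same route as the paper's: the forward inclusion via Theorem \ref{min}(1), Theorem \ref{gamma}, and the reduction lemmas of \cite{He14}, and the reverse inclusion via the Newton decomposition (Theorem \ref{newtonG}), the disjointness in Theorem \ref{dec}, and the injectivity of $i$ (Theorem \ref{inject}). The only cosmetic difference is that the paper packages the reverse inclusion as a chain of set inclusions $G\subseteq\sqcup G(\nu')\subseteq\sqcup(G\cap\brG(i(\nu'))^\natural)\subseteq G$ forced to be equalities, whereas you argue element-wise; the content is identical.
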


\begin{proof}
	We show that \[\tag{a} G(\nu) \subset \breve G(i(\nu))^\natural.\]
	
	Let $w \in W_{\th-\min}$ with $\pi_\th(w)=\pi_\th(\nu)$. Let $J, x, u$ be as in Theorem \ref{min} (1) such that $w \to_\th u x$. Then $\pi_\th(w) = \pi_\th(u x) = \pi_\th(x) =\pi_\th(\nu)$. By Theorem \ref{gamma}, $x$ is a $\th$-straight element in $\nu$. Then $x$ is also a $\th$-straight element in $i(\nu)$.
	
	By \cite[Lemma 3.1 \& Lemma 3.2]{He14}, we have $$I w I \subseteq \brG_1 \cdot_\th (\brI w \brI) = \brG_1 \cdot_\th (\brI x u \brI) = \brG_1 \cdot_\th \brI x \brI.$$ Thus $G \cdot_\th I w I \subset \brG^\natural \cdot_\th (\brG_1 \cdot_\th \brI x \br I)=\brG(i(\nu))^\natural$. (a) is proved.
	
	By Theorem \ref{inject}, the map $i$ is injective. Hence by Theorem \ref{dec}, the union $\cup_{\nu \in W \sslash_{\th} W}  \brG(i(\nu))^\natural$ is a disjoint union.
	We have \[\tag{b}G \subseteq \sqcup_{\nu' \in W \sslash_{\th} W} G(\nu') \subseteq \sqcup_{\nu' \in W \sslash_{\th} W} (G \cap \brG(i(\nu'))^\natural) \subseteq G \cap \bigl(\sqcup_{\nu' \in W \sslash_{\th} W}  \brG(i(\nu'))^\natural\bigr) \subseteq G.\] Here the first inclusion follows from Theorem \ref{newtonG} and the second inclusion follows from (a) above.
	
	Note that in (b), all the inclusions must be equalities. In particular, $G(\nu')=G \cap \brG(i(\nu'))^\natural$ for all $\nu' \in W\sslash_{\th} W$.
\end{proof}

\subsection{Newton strata of $G$ and $B(\bG)^\flat$} We consider the case where $\th=\s$. In this case, $\th$ acts trivially on $G$ and the $\th$-twisted conjugation action on $G$ is the usual conjugation action. We have the Newton decomposition $$G=\sqcup_{\nu \in W \sslash W} G(\nu).$$

By Corollary \ref{bij}, the set $W\sslash W$ is naturally bijective to $W\sslash W_a$. By Theorem \ref{inject}, we have a natural injection $W\sslash W_a \lto \brW \sslash_\th \brW_a$. Similar to Theorem \ref{main-tt}, we have that

\begin{theorem}\label{5-5}
	Let $\th=\s$. Then for any $\nu \in W\sslash W_a \cong W\sslash W$, we have $$G(\nu)=G \cap [\breve \nu]^\flat,$$ where $[\breve \nu]^\flat \in B(\bG)^\flat$ is the $\brG_1$-orbit on $\brG$ for the $\s$-twisted conjugation action.
\end{theorem}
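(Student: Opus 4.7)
The plan is to follow the proof of Theorem~\ref{main-tt} closely, adding one new stability input. In the setting $\th=\s$, the Frobenius fixes $W=\brW^\s$ and hence fixes $\Omega=\br\Omega^\s$, so Corollary~\ref{bij} identifies $W\sslash W_a$ with $W\sslash W$, and Theorem~\ref{inject} (with $\G=\{1\}$) supplies an injection $i:W\sslash W\hookrightarrow\brW\sslash_\s\brW_a$; setting $\breve\nu:=i(\nu)$, the goal is $G(\nu)=G\cap[\breve\nu]^\flat$.

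For the first step, the inclusion $G(\nu)\subseteq[\breve\nu]^\flat$, I would take any $w\in W_{\s-\min}$ with $\pi_\s(w)=\pi_\s(\nu)$, invoke Theorem~\ref{min}(1) to obtain $J,x,u$ with $w\to_\s ux$ where $x$ is $\s$-straight in $W$, hence (by Proposition~\ref{length-add}) in $\brW$, and hence a $\s$-straight representative of $\breve\nu\in\brW\sslash_\s\brW_a$. Then by \cite[Lemma~3.1 \& Lemma~3.2]{He14},
\[
IwI\subseteq\brI w\brI\subseteq\brG_1\cdot_\s\brI x\brI=[\breve\nu]^\flat.
\]
The remaining task is to upgrade this to $G\cdot_\s IwI\subseteq[\breve\nu]^\flat$, which reduces to showing that the action of any $g\in G$ preserves the $\brG_1$-orbit $[\breve\nu]^\flat$. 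Since $g\in\brG^\s$ satisfies $\s(g)=g$, one has $g\cdot_\s y=gyg\i$; as $\brG_1$ is normal in $\brG$ and $\br\Omega$ is abelian, $\breve\k(gyg\i)=\breve\k(g)\breve\k(y)\breve\k(g)\i=\breve\k(y)$, while the Newton point is constant on $\s$-conjugacy classes. By Theorem~\ref{partial} the $\brG_1$-orbits are classified by this pair of invariants, so $gyg\i\in[\breve\nu]^\flat$ whenever $y\in[\breve\nu]^\flat$.

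The second step is the same counting chain as at the end of the proof of Theorem~\ref{main-tt}:
\[
G=\sqcup_{\nu'\in W\sslash W}G(\nu')\subseteq\sqcup_{\nu'}\bigl(G\cap[i(\nu')]^\flat\bigr)\subseteq G\cap\Bigl(\sqcup_{\nu'}[i(\nu')]^\flat\Bigr)\subseteq G,
\]
where Theorem~\ref{newtonG} gives the first equality, the previous paragraph gives the second inclusion, and injectivity of $i$ (Theorem~\ref{inject}) together with Theorem~\ref{dec} makes $\sqcup_{\nu'}[i(\nu')]^\flat$ a disjoint union. All inclusions are forced to be equalities, yielding $G(\nu)=G\cap[\breve\nu]^\flat$. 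The one delicate point beyond Theorem~\ref{main-tt} is precisely the stability claim above: a priori $G$-conjugation could shuffle several $\brG_1$-orbits inside one $\brG^\natural$-orbit of Theorem~\ref{main-tt}, and the abelianness of $\br\Omega$ is what rules this out.
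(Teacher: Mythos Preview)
Your argument is correct and is precisely what the paper's ``Similar to Theorem~\ref{main-tt}'' intends: rerun that proof with $\brG_1$ in place of $\brG^\natural$, the one new wrinkle being that $G\not\subseteq\brG_1$ in general, which you correctly handle via the stability of $[\breve\nu]^\flat$ under $\s$-conjugation by elements of $G=\brG^\s$. One minor citation point: when you say ``by Theorem~\ref{partial} the $\brG_1$-orbits are classified by this pair of invariants'', you are implicitly also using the bijection $B(\bG)^\flat\cong\brW\sslash_\s\brW_a$ from Theorem~\ref{dec} (equivalently \S\ref{th-s}).
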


Note that as we discussed in \S \ref{th-s} and Example \ref{ex-th-s}, the statement is not true if we replace $B(\bG)^\flat$ by $B(\bG)$.

\end{document}